\theoremstyle{definition}
\def\line#1{\hbox to \hsize{#1\hfill}}
\begin{document}
\theoremstyle{plain}
\newtheorem{Thm}{Theorem}
\newtheorem{Cor}{Corollary}
\newtheorem{Con}{Conjecture}
\newtheorem{Main}{Main Theorem}
\newtheorem{Lem}{Lemma}
\newtheorem{Prop}{Proposition}
\def\R{\mathbb{R}}
\def\F{\mathbb{F}}
\def\Re{{\frak R\frak e}}
\def\Im{{\frak I\frak m}}
\def\S{\mathbb{S}}
\def\H{\mathbb{H}}
\def\L{\mathbb{L}}
\theoremstyle{definition}
\newtheorem{Def}{Definition}
\newtheorem{Note}{Note}

\newtheorem{example}{\indent\sc Example}

\theoremstyle{remark}
\newtheorem{notation}{Notation}
\renewcommand{\thenotation}{}

\errorcontextlines=0
\numberwithin{equation}{section}
\renewcommand{\rm}{\normalshape}%

\title{Minimal surfaces in the Riemannian product of surfaces}
\author{Nikos Georgiou}
\address{Nikos Georgiou\\
  Department of Mathematics\\
          South East Technological University\\
          Waterford\\
          Co. Waterford\\
          Ireland.}
\email{ngeorgiou@setu.ie}

\author{Brendan Guilfoyle}
\address{Brendan Guilfoyle\\
          School of STEM\\
          Munster Technological University, Kerry\\
          Tralee\\
          Co. Kerry\\
          Ireland.}
\email{brendan.guilfoyle@mtu.ie}

\keywords{Product manifold, minimal surface, 4-manifold}
\subjclass{Primary 53C42; Secondary 53C50}
\date{\today}

\begin{abstract}
Minimal surfaces in the Riemannian product of surfaces of constant curvature have been considered recently, particularly as these products arise as spaces of oriented geodesics of 3-dimensional space-forms.  This papers considers more general Riemannian products of surfaces and explores geometric and topological restrictions that arise for minimal surfaces. We show that generically, a totally geodesic surface in a Riemannian product is locally either a slice or a product of geodesics.  If the Gauss curvatures of the factors are negative, it is proven that there are no minimal 2-spheres, while minimal 2-tori are Lagrangian with respect to both product symplectic structures. If the surfaces have non-zero bounded curvatures, we establish a sharp lower bound on the area of minimal 2-spheres and explore the properties of the Gauss and normal curvatures of general compact minimal surfaces. 
\end{abstract}

\maketitle
\section{Introduction}

The study of immersed surfaces in 4-manifolds is in its infancy, containing as it does all of the difficulties associated with curves knotted in 3-manifolds. In particular, the codimension of two introduces both computational and topological complications which must be geometrically controlled. Previously, various special classes of surfaces immersed in special Riemannian products have been investigated, for example products of 2-spheres \cite{CU} \cite{torurb}, of deSitter 2-spaces \cite{Ge2} and hyperbolic 2-spaces \cite{urbano2} - see also \cite{Ge1}. For such products of constant curvature surfaces, the Pl\"ucker embedding links the product geometry to the space of oriented geodesics of 3-dimensional spaces of constant curvature \cite{AGK} \cite{GG1} \cite{GK1} \cite{salvai0} \cite{salvai1}.

The purpose of this article is to investigate minimal surfaces in the Riemannian 4-manifold $\Sigma_1\times \Sigma_2$ endowed with the product metric $G=g_1\oplus g_2$, where $(\Sigma_1,g_1)$ and $(\Sigma_2,g_2)$ are more general Riemannian surfaces.   The associated complex structures $j_1,j_2$ on each surface yields two complex structures $J_1=j_1\oplus j_2$ and $J_2=-j_1\oplus j_2$ on the product such that the triples $(G,J_1,\Omega_1)$ and $(G,J_2,\Omega_2)$ are K\"ahler structures on $\Sigma_1\times \Sigma_2$. 

A minimal submanifold of a Riemannian manifold $(M,g)$ is a submanifold which is a critical point of the volume functional associated to $g$. Using the first variation formula, a minimal submanifold is characterized by the vanishing of the mean curvature vector, which in our case has values in the 2-dimensional normal bundle of the immersion. 

One class of minimal surfaces is the totally geodesic surfaces, on which not only the mean curvature vector vanishes, but the whole second fundamental form vanishes. Our first result identifies totally geodesic surfaces in the product metric:

\begin{Thm}\label{t:1}
Let $(\Sigma_1,g_1)$ and $(\Sigma_2,g_2)$ be two 2-manifolds with Gauss curvatures $K_1$ and $K_2$ respectively. If $F: S\rightarrow \Sigma_1\times\Sigma_2$ is a totally geodesic immersion of a surface and $K_1(F(p))\neq K_2(F(p))$ for any $p\in S$, then $F(S)$ is locally either a slice or the product of geodesics in $(\Sigma_1,g_1)$ and $(\Sigma_2,g_2)$.
\end{Thm}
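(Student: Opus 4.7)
My plan is to exploit the fact that the factor distributions $D_1:=T\Sigma_1\oplus 0$ and $D_2:=0\oplus T\Sigma_2$ on $M=\Sigma_1\times\Sigma_2$ are both parallel with respect to the Levi-Civita connection of $G=g_1\oplus g_2$, since that connection splits as a direct product. Because $F$ is totally geodesic, parallel transport along curves in $F(S)$ preserves $TF(S)$, and combined with the parallelism of the $D_i$ it therefore also preserves $TF(S)\cap D_i$. Hence the integer-valued functions
\[k_i(p):=\dim\bigl(T_{F(p)}F(S)\cap D_i\bigr),\qquad i=1,2,\]
are locally constant on $S$. Since $\dim TF(S)=2$ and $D_1\cap D_2=0$ we have $k_1+k_2\leq 2$, leaving the six cases $(k_1,k_2)\in\{(2,0),(0,2),(1,1),(1,0),(0,1),(0,0)\}$.

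The cases $(2,0)$ and $(0,2)$ give slices immediately: $TF(S)=D_1$ or $D_2$ forces one of the factors $F_i$ to be locally constant. The case $(1,1)$ yields the product of geodesics. Here each $\mathcal{L}_i:=TF(S)\cap D_i$ is a rank-one parallel subbundle, and on such a subbundle any unit local section is automatically parallel (the connection one-form vanishes by compatibility with the metric). This produces a parallel orthonormal frame $\{X_1,X_2\}$ on $F(S)$ with $X_i\in D_i$, which commutes as $[X_1,X_2]=\nabla_{X_1}X_2-\nabla_{X_2}X_1=0$, giving local coordinates $(s,t)$ on $S$ with $F(s,t)=(\gamma_1(s),\gamma_2(t))$. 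The parallelism of $X_i$ on the totally geodesic $F(S)$ then translates into $\nabla^{g_i}_{\gamma_i'}\gamma_i'=0$, so each $\gamma_i$ is a geodesic of $(\Sigma_i,g_i)$.

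The remaining cases $(1,0)$, $(0,1)$ and $(0,0)$ need to be ruled out using the pointwise hypothesis $K_1\neq K_2$. The main tool is the algebraic Gauss consequence of total geodesicity, namely $R(U,V)W\in TF(S)$ for all $U,V,W\in TF(S)$. The product Riemannian structure gives $R=R_1\oplus R_2$ with $R_i(X,Y)Z=K_i\bigl(g_i(Y,Z)X-g_i(X,Z)Y\bigr)$, and for an orthonormal tangent frame decomposed as $e_i=X_i+Y_i$, $X_i\in D_1$, $Y_i\in D_2$, imposing the tangency of $R(e_1,e_2)e_j$ yields
\[(K_1+K_2)\,\langle X_1,X_2\rangle_{g_1}=0\quad\text{and}\quad K_1|X_i|^2=K_2|Y_i|^2,\ i=1,2.\]
In case $(1,0)$, picking $e_1\in\mathcal{L}_1$ so that $Y_1=0$ collapses the relation to $K_1\equiv 0$ on $F(S)$, which together with the constraint $TF(S)\cap D_2=0$ and the hypothesis $K_1\neq K_2$ leads to a contradiction after a finer analysis of the full totally geodesic equations for $F$; case $(0,1)$ is symmetric.

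The main technical obstacle lies in the generic case $(0,0)$, where both projections $\pi_i|_{TF(S)}$ are isomorphisms. The above relations together with $|X_i|^2+|Y_i|^2=1$ then fix $|X_i|^2=K_2/(K_1+K_2)$ and $|Y_i|^2=K_1/(K_1+K_2)$ (the alternative $K_1+K_2=0$ is ruled out directly), and $\langle X_1,X_2\rangle=0$ forces $F$ to be locally $J_1$- or $J_2$-(anti)holomorphic. The extra ingredient I would bring in is that $\omega_1$ and $\omega_2$ are parallel on the two K\"ahler products $(G,J_1)$ and $(G,J_2)$, so by total geodesicity their pullbacks $F^*\omega_i$ are parallel on $S$; in two real dimensions this forces both K\"ahler angles to be constant. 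Comparing this constancy with the length expressions makes $K_1/K_2$ constant along $F(S)$, and a final appeal to the totally geodesic equation for the local holomorphic graph $w=f(z)$ representing $F$ is designed to conflict with the pointwise hypothesis $K_1(F(p))\neq K_2(F(p))$. This last step in case $(0,0)$ is where the main technical difficulty resides.
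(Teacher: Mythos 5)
Your decomposition by the ranks $k_i=\dim\bigl(T F(S)\cap D_i\bigr)$ is a genuinely different route from the paper's, which instead shows that total geodesicity forces $f_1=f_2=0$, hence that the K\"ahler functions $C_1,C_2$ are constant, and then exploits the identities $\Delta C_j=0$. Your cases $(2,0)$, $(0,2)$ and $(1,1)$ are complete and clean (indeed $(1,1)$ recovers the product-of-geodesics case without any curvature hypothesis). The difficulty is that your argument stops exactly where the hypothesis $K_1\neq K_2$ has to do its work. In case $(0,0)$ you reduce to $F$ being conformal onto each factor with constant K\"ahler angles, but the final contradiction is only ``designed to conflict'' with $K_1\neq K_2$; you acknowledge that this is the main technical obstacle and do not carry it out. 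This is precisely the computation that occupies most of the paper's proof: for a totally geodesic $J_1$-complex curve with $C_2^2<1$ one shows $K=K^\bot$, evaluates both curvatures using $C_2=C_1(K_1-K_2)/(K_1+K_2)$, and arrives at $K_1^2+K_2^2-K_1K_2=K_1K_2$, i.e.\ $K_1=K_2$, a contradiction. Without some version of this computation (or of the $\Delta C_j=0$ identities), case $(0,0)$ remains open.

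The claimed contradiction in case $(1,0)$ is worse than unproven: it does not follow from the stated hypotheses. You correctly derive $K_1\equiv 0$ along $F$, but that is consistent with the conclusion failing. Take $\Sigma_1=\mathbb{R}^2$ flat, $\Sigma_2$ with $K_2<0$, let $\gamma_2$ be a unit-speed geodesic of $\Sigma_2$, and set $F(s,t)=\bigl((s,ct),\gamma_2(t)\bigr)$ with $c\neq 0$: every covariant second derivative vanishes, so $F$ is totally geodesic; $K_1=0\neq K_2$ along $F$; $TF(S)\cap D_2=0$, so $F(S)$ is not locally a product of geodesics, and it is evidently not a slice. Hence no ``finer analysis'' can close case $(1,0)$ without an additional nonvanishing assumption on the curvatures --- the same degeneracy that the paper's own first bullet point quietly assumes away when it passes from $K_1(C_1-C_2)=0$ and $K_2(C_1+C_2)=0$ to $C_1=C_2=0$. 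If you add the hypothesis $K_1K_2\neq 0$ along $F$, your own relation $K_1|X_1|^2=K_2|Y_1|^2$ kills cases $(1,0)$ and $(0,1)$ immediately, and your framework yields a complete proof once case $(0,0)$ is actually finished.
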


Further examples of minimal surfaces are the complex surfaces (or complex curves in the language of algebraic geometry): surfaces whose tangent spaces are invariant under the complex structures $J_1$ or $J_2$. Minimal surfaces that are Lagrangian with respect to the symplectic structures $\Omega_1$ and $\Omega_2$ have also been an active area of research \cite{CU} \cite{Ge1} \cite{Ge2}. 

The following Theorem establishes the existence of a 1-parameter family of minimal surfaces in the product of 2-dimensional space forms that are neither complex nor Lagrangian, given a pair of solutions of the sinh-Gordon equation. 

\begin{Thm}\label{t:2}
If $X_1,X_2:{\mathbb C}\rightarrow {\mathbb R}$ are two smooth solutions of the sinh-Gordon equation:
\[
X_{z\bar z}+\frac{1}{2}\sinh (2X)=0,
\]
then there exists a 1-parameter family of minimal immersions $F_t:{\mathbb C}\rightarrow {\mathbb S}^2_p\times {\mathbb S}^2_p$, where ${\mathbb S}^2_p$ is a 2-dimensional real space form, so that the induced metrics $F_t^{\ast}G$ are all equal to $4\cosh(X_1+X_2)\cosh(X_1-X_2)|dz^2|$ with K\"ahler functions $C_1=\tanh(X_1-X_2)$ and $C_2=\tanh(X_1+X_2)$.
\end{Thm}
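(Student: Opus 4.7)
The plan is to realise the given sinh-Gordon data as the Gauss--Codazzi--Ricci data of a conformal minimal immersion, and then produce the 1-parameter family via the associated-family structure built into the integrable sinh-Gordon system.

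First I would set up moving frames adapted to both K\"ahler structures. Write $F=(f_1,f_2)$ and use a conformal coordinate $z$ on $\mathbb{C}$, so the induced metric is $e^{2\omega}|dz|^2$. For a tangent orthonormal frame $(e_1,e_2)$ define the K\"ahler functions $C_i=\langle J_ie_1,e_2\rangle$ with $J_1=j_1\oplus j_2$ and $J_2=-j_1\oplus j_2$. Decomposing $dF$ along the two factors expresses $|df_1|^2$, $|df_2|^2$ and the Jacobians of $f_i$ in terms of $\omega,C_1,C_2$; in particular $e^{2\omega}\sqrt{(1-C_1^2)(1-C_2^2)}$ picks up the product of the Jacobians, and this is what motivates the ansatz for the conformal factor in the statement.

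Next I would write the Gauss, Codazzi and Ricci equations for the immersion into $\mathbb{S}^2_p\times\mathbb{S}^2_p$, exploiting that each factor has constant curvature $p$, and impose minimality. Substituting $C_1=\tanh(X_1-X_2)$, $C_2=\tanh(X_1+X_2)$ together with the proposed conformal factor
\[
e^{2\omega}=4\cosh(X_1+X_2)\cosh(X_1-X_2)=2\cosh(2X_1)+2\cosh(2X_2),
\]
the compatibility system should collapse to the decoupled pair
\[
X_{i,z\bar z}+\tfrac{1}{2}\sinh(2X_i)=0,\qquad i=1,2,
\]
which are exactly the sinh-Gordon equations in the hypothesis. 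Conversely, given any two solutions $X_1,X_2$, the Gauss--Codazzi--Ricci system is satisfied, so by the fundamental theorem of isometric immersions there exists a minimal immersion $F:\mathbb{C}\to \mathbb{S}^2_p\times \mathbb{S}^2_p$ with the prescribed metric and K\"ahler functions, unique up to an ambient isometry.

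Finally, to get the 1-parameter family $\{F_t\}$ I would invoke the $S^1$-symmetry of the zero-curvature representation of sinh-Gordon: rotating the holomorphic quadratic differential that encodes the $(2,0)$-part of the second fundamental form by a phase $e^{it}$ leaves $\omega$, $C_1$ and $C_2$ invariant but produces genuinely non-congruent minimal immersions (the usual associated family). I expect the main obstacle to be the algebraic reduction in the second step: organising the Codazzi and Ricci equations, which are initially coupled through the tangent and normal connection 1-forms, and checking that the prescribed substitution really diagonalises them into two independent sinh-Gordon equations with no residual coupling; choosing the normal frame so that the second fundamental form has a clean $(C_1\pm C_2)$-type block structure should be the key technical manoeuvre.
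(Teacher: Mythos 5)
Your proposal follows essentially the same route as the paper: the paper verifies that the ansatz $C_1=\tanh(X_1-X_2)$, $C_2=\tanh(X_1+X_2)$, $e^{2u}=4\cosh(X_1+X_2)\cosh(X_1-X_2)$, together with explicit choices of $\gamma_j$, $f_j$ and $A$ carrying a phase $e^{it}$, satisfies the structure equations of Proposition \ref{p:derivatives} (the complexified Gauss--Codazzi--Ricci system, which the preceding proposition has already shown reduces to the two decoupled sinh-Gordon equations), and then invokes its existence-and-uniqueness result for fundamental data in ${\mathbb S}^2_p\times{\mathbb S}^2_p$. The only cosmetic difference is that the paper inserts the $S^1$-parameter directly into $\gamma_1,\gamma_2$ (both acquiring the phase $e^{it/2}$, which is genuinely not a normal-frame rotation since such rotations act on $\gamma_1,\gamma_2$ with opposite phases) rather than describing it as a rotation of the second fundamental form alone.
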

This generalizes Theorem 1 of \cite{torurb} from the product of 2-spheres to the product of any 2-dimensional space of constant curvature. Previously a similar result for minimal surfaces in ${\mathbb S}^2\times{\mathbb R}$ had been proven in \cite{HKS13}.

We next focus on compact minimal surfaces.

\begin{Thm}\label{t:3}
There are no minimal 2-spheres in the product of oriented Riemannian surfaces of negative curvatures.
\end{Thm}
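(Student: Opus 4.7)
The plan is to reduce the statement to a standard vanishing theorem for harmonic maps from $S^2$ into non-positively curved targets, using the splitting afforded by the product metric. A minimal isometric immersion $F:S^2\to\Sigma_1\times\Sigma_2$ is, with respect to the induced conformal structure on $S^2$, a conformal harmonic map. Because harmonicity out of a surface is a conformal invariant, I may replace the induced metric on $S^2$ by the round metric $g_0$ of Gauss curvature one and still regard $F$ as harmonic.

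Write $F=(F_1,F_2)$ with $F_k=\pi_k\circ F$ the two projected components. Since $G=g_1\oplus g_2$ is a Riemannian product, its Levi-Civita connection splits as a direct sum; a routine computation then yields $\tau(F)=(\tau(F_1),\tau(F_2))$, so each $F_k:(S^2,g_0)\to(\Sigma_k,g_k)$ is itself harmonic. It therefore suffices to show that each $F_k$ is constant, for then $F$ itself is constant, contradicting the immersion hypothesis.

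For this, I would invoke the Bochner identity for a harmonic map $f$:
\[
\tfrac{1}{2}\Delta|df|^2 \;=\; |\nabla df|^2 + \mathrm{Ric}^{S^2}(df,df) - \sum_{\alpha,\beta}\bigl\langle R^{\Sigma_k}(df(e_\alpha),df(e_\beta))df(e_\beta),df(e_\alpha)\bigr\rangle,
\]
where $\{e_\alpha\}$ is a local $g_0$-orthonormal frame. The round normalisation gives $\mathrm{Ric}^{S^2}=g_0$, so the Ricci term equals $|df|^2$, while the hypothesis $K_k<0$ forces the target curvature term to be non-negative. Hence $\Delta|df|^2\geq 2|df|^2\geq 0$ on the closed surface $S^2$; evaluating at a maximum of $|df|^2$ yields $|df|^2\equiv 0$, and $F_k$ is constant.

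The argument is structurally routine once the reduction to harmonic components is in place. The only care needed is in the splitting of the tension field over a Riemannian product and in the bookkeeping of signs and normalisations in the Bochner identity; I do not anticipate any substantive obstacle beyond these.
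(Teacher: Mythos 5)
Your proof is correct, but it takes a genuinely different route from the paper's. You reduce the theorem to the classical Eells--Sampson vanishing theorem: a minimal immersion is conformal harmonic, the tension field splits over the Riemannian product so that each projection $F_k:(S^2,g_0)\to(\Sigma_k,g_k)$ is harmonic, and the Bochner identity with $\mathrm{Ric}^{S^2}>0$ and $K_k\le 0$ forces each $F_k$, hence $F$, to be constant, contradicting the immersion hypothesis. The paper instead stays inside its own structural framework: it uses the identities for $|\nabla C_j|^2$ and $\Delta C_j$ from Proposition \ref{p:laplacianandnorm} and a case analysis on whether $C_j^2=1$ or $C_j^2<1$ to derive the pointwise estimate
\[
K\;\le\;\tfrac{1}{4}(C_1-C_2)^2K_1+\tfrac{1}{4}(C_1+C_2)^2K_2\;\le\;0
\]
along the minimal surface, and then invokes Gauss--Bonnet to get $\chi(S)\le 0$. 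Your argument is shorter, needs none of the K\"ahler-function machinery, and actually proves more: non-positive curvature of the factors already suffices, and the same reasoning excludes all non-constant harmonic 2-spheres in such products. What the paper's approach buys is the intermediate pointwise inequality $K\le 0$ and the gradient/Laplacian formulas behind it, which are reused almost verbatim in the proofs of Theorems \ref{t:4}, \ref{t:5} and \ref{t:6}. The only points you should spell out in a final write-up are the splitting $\tau(F)=(\tau(F_1),\tau(F_2))$ (which holds because $T(\Sigma_1\times\Sigma_2)$ is a direct sum of bundles with their connections, so $\nabla dF$ splits before taking the trace) and the sign conventions in the Bochner formula; both are standard and you have them right.
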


For minimal 2-tori in the product of 2-manifolds of negative curvatures, the following holds.

\begin{Thm}\label{t:4}
If a 2-torus is minimally immersed in the product of surfaces of negative curvatures, then it is Lagrangian with respect to both symplectic structures $\Omega_1$ and $\Omega_2$.
\end{Thm}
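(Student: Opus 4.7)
The plan is to extract a pointwise sign constraint on the intrinsic Gauss curvature $K$ of the minimally immersed torus from the Gauss equation, and then combine it with the Gauss--Bonnet theorem to force $K$ and the relevant K\"ahler functions to vanish identically.

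First I would fix a point $p$ on the torus $S$, choose an oriented orthonormal tangent frame $(e_1,e_2)$, and introduce the two K\"ahler functions $C_i=\Omega_i(e_1,e_2)$. Since $\Omega_1=\omega_1+\omega_2$ and $\Omega_2=-\omega_1+\omega_2$, where $\omega_i$ is the area form of $(\Sigma_i,g_i)$, one obtains
\[
\omega_1(e_1,e_2)=\tfrac{1}{2}(C_1-C_2),\qquad \omega_2(e_1,e_2)=\tfrac{1}{2}(C_1+C_2).
\]
Because the curvature tensor of $(\Sigma_1\times\Sigma_2,G)$ splits as $R_1\oplus R_2$ and each factor has sectional curvature equal to its own Gauss curvature, a direct computation using the identity $|u\wedge v|^2=\omega(u,v)^2$ in each 2-dimensional factor gives the ambient sectional curvature of the tangent plane as
\[
\bar K(T_pS)=\tfrac{K_1}{4}(C_1-C_2)^2+\tfrac{K_2}{4}(C_1+C_2)^2.
\]

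Next I would apply the Gauss equation. For a minimal surface in a 4-manifold, the trace-free condition on the second fundamental form $h$ reduces the Gauss equation to $K=\bar K(T_pS)-\tfrac{1}{2}|h|^2$. Inserting the formula above yields
\[
K=\tfrac{K_1}{4}(C_1-C_2)^2+\tfrac{K_2}{4}(C_1+C_2)^2-\tfrac{1}{2}|h|^2,
\]
and since $K_1<0$ and $K_2<0$ along the immersion, every summand is non-positive; hence $K\le 0$ pointwise on the torus.

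Finally I would invoke Gauss--Bonnet: as $\chi(S)=0$, we have $\int_S K\,dA=0$, which together with $K\le 0$ forces $K\equiv 0$. Each non-positive term on the right-hand side must then vanish identically, and because $K_1,K_2<0$ this yields $C_1-C_2\equiv 0$ and $C_1+C_2\equiv 0$, so $C_1\equiv C_2\equiv 0$, proving that the immersion is Lagrangian with respect to both $\Omega_1$ and $\Omega_2$ (and, as a bonus, totally geodesic since $|h|\equiv 0$). The only delicate step is verifying the decomposition of $\bar K(T_pS)$ in terms of $C_1$ and $C_2$ with the correct factors of $\tfrac{1}{4}$; the remainder of the argument is structural, relying on the sign of $K_i$ and the vanishing Euler characteristic of the torus.
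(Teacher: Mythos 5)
Your argument is correct and is essentially the paper's own: the paper obtains the same pointwise bound $K\le \tfrac{K_1}{4}(C_1-C_2)^2+\tfrac{K_2}{4}(C_1+C_2)^2\le 0$ by summing the inequalities $K+(-1)^{j+1}K^\bot\le C_jM_j$ coming from $|f_j|^2\ge 0$, which is exactly the Gauss equation for a minimal immersion in disguise (one has $|h|^2=8e^{-4u}(|f_1|^2+|f_2|^2)$), and then applies Gauss--Bonnet with $\chi(S)=0$ precisely as you do. Your direct route through equation (\ref{e:gausseqn}) is marginally more economical and makes the bonus conclusion $h\equiv 0$ (hence $F(S)$ is locally a product of geodesics, cf.\ Proposition \ref{p:bothlagrangian}) immediate.
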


Next, we provide a lower area bound for minimal 2-spheres.

\begin{Thm}\label{t:5}
Assume that $(\Sigma_1,g_1)$ and $(\Sigma_2,g_2)$ are not both flat, and that both have bounded Gauss curvatures $K_1$ and $K_2$. If $F:S\rightarrow \Sigma_1\times\Sigma_2$ is a minimal embedding of a 2-sphere $S$, then the area $|S|$ of $S$ satisfies
\begin{equation}\label{e:areainequality}
|S|\geq\frac{4\pi}{\max\{\displaystyle\sup_{x\in\Sigma_1}|K_1(x)|,\sup_{y\in\Sigma_2}|K_2(y)|\}}.
\end{equation}

\end{Thm}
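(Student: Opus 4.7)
The plan is to combine the Gauss equation for a minimal surface in codimension two with the algebraic structure of the sectional curvature in a Riemannian product, and then to conclude by Gauss--Bonnet on $S^2$.

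First, I would recall that for a 2-dimensional submanifold with second fundamental form $\mathrm{II}$ in an ambient 4-manifold $(M,G)$, the Gauss equation at a point $p$, computed in an orthonormal basis $\{e_1,e_2\}$ of $T_pS$, reads
\[
K_S(p) \;=\; K^{G}(T_pS) \;+\; \langle \mathrm{II}(e_1,e_1),\mathrm{II}(e_2,e_2)\rangle \;-\; |\mathrm{II}(e_1,e_2)|^2,
\]
where $K^{G}(T_pS)$ denotes the ambient sectional curvature of the tangent plane. Since $F$ is minimal we have $\mathrm{II}(e_1,e_1)+\mathrm{II}(e_2,e_2)=0$, so the mixed term equals $-|\mathrm{II}(e_1,e_1)|^2$, yielding the pointwise inequality
\[
K_S(p) \;\le\; K^{G}(T_pS).
\]

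Second, I would bound the ambient sectional curvature of an arbitrary tangent plane in the product $(\Sigma_1\times\Sigma_2, g_1\oplus g_2)$. Writing $e_i = u_i + v_i$ with $u_i\in T\Sigma_1$, $v_i\in T\Sigma_2$, the product formula for the curvature tensor gives
\[
K^{G}(T_pS) \;=\; K_1\,|u_1\wedge u_2|^2 \;+\; K_2\,|v_1\wedge v_2|^2,
\]
evaluated at $F(p)=(x,y)$. A short linear-algebra computation, using $|u_i|^2+|v_i|^2=1$ and the orthogonality $\langle u_1,u_2\rangle+\langle v_1,v_2\rangle=0$, shows that $|u_1\wedge u_2|^2$ and $|v_1\wedge v_2|^2$ are non-negative and sum to at most $1$. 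A case analysis on the signs of $K_1(x)$ and $K_2(y)$ then gives
\[
K^{G}(T_pS) \;\le\; \max\{|K_1(x)|,|K_2(y)|\} \;\le\; M,
\]
where $M := \max\{\sup_{\Sigma_1}|K_1|,\sup_{\Sigma_2}|K_2|\}$ is finite by the boundedness hypothesis and strictly positive since the factors are not both flat.

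Finally, combining the two steps yields $K_S \le M$ on $S$. Applying the Gauss--Bonnet theorem to the minimally immersed $2$-sphere,
\[
4\pi \;=\; \int_S K_S\, dA \;\le\; M\,|S|,
\]
which rearranges to the claimed inequality \eqref{e:areainequality}. The main technical step is the plane-geometry bound in the product metric; everything else is standard. I expect sharpness to follow from equality cases of a totally geodesic round slice $\{*\}\times \Sigma_2$ when one factor is a round $2$-sphere, but the theorem as stated asks only for the inequality.
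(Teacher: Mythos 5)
Your proposal is correct, and its overall skeleton is the same as the paper's: establish the pointwise bound $K\leq\max\{\sup|K_1|,\sup|K_2|\}$ and conclude by Gauss--Bonnet on the sphere. Where you differ is in how that pointwise bound is obtained. The paper derives it from its complexified formalism: the nonnegativity of $|f_j|^2=\tfrac{e^{4u}}{8}(-K+(-1)^jK^\bot+C_jM_j)$ for $j=1,2$, summed so that the normal curvature $K^\bot$ cancels, giving $K\leq\tfrac{(C_1-C_2)^2}{4}K_1+\tfrac{(C_1+C_2)^2}{4}K_2$; the coefficients are then absorbed into the maximum via $\tfrac{(C_1\mp C_2)^2}{4}\leq\tfrac{1\mp C_1C_2}{2}$, which sum to $1$. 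You instead invoke the classical Gauss equation, note that minimality makes the second-fundamental-form contribution $-|\mathrm{II}(e_1,e_1)|^2-|\mathrm{II}(e_1,e_2)|^2\leq 0$, and bound the ambient sectional curvature $K_1|u_1\wedge u_2|^2+K_2|v_1\wedge v_2|^2$ by the elementary estimate that the two wedge norms are nonnegative and sum to at most $1$ (your identity $|u_1\wedge u_2|=|\mathrm{Jac}(F_1)|=|C_1-C_2|/2$, etc., shows the two intermediate inequalities are literally the same). Your route is more elementary and self-contained, requiring none of the $C_j$, $M_j$, $K^\bot$ machinery, and makes transparent that only minimality of the immersion (not embeddedness) and the product structure of the curvature tensor are used; the paper's route is less direct here but reuses identities it needs elsewhere (Theorems 3, 4 and 6). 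Both arguments are complete and correct.
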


Finally, the following theorem gives a rigidity result for the sum of the Gauss and normal curvatures of a compact minimal surface.

\begin{Thm}\label{t:6}
Let $F:S\rightarrow \Sigma_1\times\Sigma_2$ be a minimal immersion of an oriented compact surface with $M_j\neq 0$ (for the definition see equation (\ref{e:simplicity})). Then, the following statements hold.
\begin{enumerate}
\item If $K+(-1)^{j+1}K^\bot\geq 0$, then either $F$ is a complex curve with respect to the complex structure $J_j$ or $F$ is Lagrangian with respect to the symplectic structure $\Omega_j$.
\item If both $(\Sigma_1,g_1)$ and $(\Sigma_2,g_2)$ have negative curvatures, then $K+(-1)^{j+1}K^\bot<0$ cannot hold everywhere along $F$.
\end{enumerate}
\end{Thm}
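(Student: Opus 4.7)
My plan is to establish a Bochner-type identity for $\log M_j$ on the compact Riemann surface $(S,F^{*}G)$ and then to combine pointwise maximum-principle arguments with integration against the topology of $S$.

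The main technical step I would take---and the principal obstacle---is to derive an identity of the shape
\begin{equation*}
\Delta\log M_j \;=\; -2\bigl(K+(-1)^{j+1}K^{\perp}\bigr),
\end{equation*}
valid on the open set $\{M_j>0\}$, by combining the minimality condition $\vec{H}=0$, the Gauss, Codazzi and Ricci equations of $F$, and the Kähler identities for $(G,J_j,\Omega_j)$. The crucial point to exploit is that the two parallel complex structures $J_1,J_2$ together diagonalize the ambient curvature of the product $\Sigma_1\times\Sigma_2$, which isolates the combination $K+(-1)^{j+1}K^{\perp}$ on the right-hand side. I would expect $M_j$ to agree up to a positive conformal factor with the squared norm of an anti-holomorphic section $\Phi_j$ of a complex line bundle over $S$, whose zero set coincides with the union of the $J_j$-complex and $\Omega_j$-Lagrangian loci and consists of isolated points.

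For Part~(1), the hypothesis $K+(-1)^{j+1}K^{\perp}\ge 0$ gives $\Delta\log M_j\le 0$ on $\{M_j>0\}$, making $\log M_j$ superharmonic there. I would integrate distributionally over $S$: the Poincar\'e--Lelong contribution of the zeros of $\Phi_j$ appears on the left as $4\pi$ times the total order of zeros, while the right-hand side, combined with Gauss--Bonnet $\int_S K\,dA=2\pi\chi(S)$ and the Chern formula $\int_S K^{\perp}\,dA=2\pi\langle e(\nu),[S]\rangle$ for the normal Euler number, produces a topological integer of the opposite sign. Matching signs forces either $\Phi_j\equiv 0$ or the curvature combination to vanish identically along with a degenerate zero-divisor structure. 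Decomposing the zero locus of $\Phi_j$ into its $J_j$-complex and $\Omega_j$-Lagrangian pieces and using connectedness of $S$ then yields the stated dichotomy: globally one of the two factors must vanish.

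For Part~(2), suppose for contradiction that $K+(-1)^{j+1}K^{\perp}<0$ strictly everywhere. The identity gives $\Delta\log M_j>0$ on $\{M_j>0\}$, so $\log M_j$ is strictly subharmonic there; this set is non-empty by the standing hypothesis $M_j\not\equiv 0$. Since $\log M_j\to-\infty$ at any isolated zero of $M_j$ and $S$ is compact, $\log M_j$ attains a maximum at some interior point of $\{M_j>0\}$, where the strong maximum principle forces $\Delta\log M_j\le 0$. This contradicts the strict positivity. The negative-curvature hypotheses on $\Sigma_1,\Sigma_2$ enter in ensuring the Bochner identity holds in its non-degenerate form and in ruling out the flat-slice and product-of-geodesic cases of Theorem~\ref{t:1}, which might otherwise trivially realize the strict inequality.
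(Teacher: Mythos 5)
Your argument rests on the identity $\Delta\log M_j=-2\bigl(K+(-1)^{j+1}K^{\perp}\bigr)$, and this identity is false. The function $M_j=(-1)^{j+1}\tfrac{C_1-C_2}{2}K_1+\tfrac{C_1+C_2}{2}K_2$ is a curvature combination, not the squared norm of an anti-holomorphic section, and its zero set has nothing to do with the union of the $J_j$-complex locus $\{C_j^2=1\}$ and the $\Omega_j$-Lagrangian locus $\{C_j=0\}$. A concrete counterexample: for a slice $\Sigma_1\times\{q\}$ with $\Sigma_1$ of constant nonzero curvature one has $C_1=1$, $C_2=-1$, $M_1=K_1$ constant, so $\Delta\log M_1=0$, while $K+K^{\perp}=K_1\neq 0$. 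The object that does satisfy a Bochner-type formula is $1\pm C_j$: Proposition \ref{p:laplacian} gives $\Delta\log(1\pm C_j)=\mp M_j+K+(-1)^{j+1}K^{\perp}$ away from complex points, so $\Delta\log(1-C_j^2)=+2\bigl(K+(-1)^{j+1}K^{\perp}\bigr)$ — note the sign is opposite to the one you need for superharmonicity in Part (1), and the section whose norm this controls is $\gamma_j$ (with $|\gamma_j|^2=\tfrac{e^{2u}}{2}(1-C_j^2)$ and $(\gamma_j)_{\bar z}=(-1)^{j+1}\bar A\gamma_j$), whose zeros are exactly the complex points, not the Lagrangian ones. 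A telling symptom is that your argument never uses the hypothesis $M_j\neq 0$, which is indispensable: without it one cannot pass from $K+(-1)^{j+1}K^{\perp}\equiv 0$ to $C_j\equiv 0$ in Part (1).

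The paper's actual route is more elementary. For (1) it combines $C_jM_j=K+(-1)^{j+1}K^{\perp}+8e^{-4u}|f_j|^2$ and $|\nabla C_j|^2=8e^{-4u}(1-C_j^2)|f_j|^2$ into
\begin{equation*}
\tfrac12\Delta(C_j^2)=C_j\Delta C_j+|\nabla C_j|^2=-(1-C_j^2)\bigl(K+(-1)^{j+1}K^{\perp}\bigr)-16e^{-4u}C_j^2|f_j|^2,
\end{equation*}
integrates this over $S$ (the left side integrates to zero), and concludes that both nonnegative terms on the right vanish identically; the relation $16e^{-4u}C_j^2|f_j|^2=2C_j^3M_j$ together with $M_j\neq 0$ then yields $C_j^2=1$ or $C_j=0$. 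For (2) the paper argues by contradiction with a critical-point analysis: if $K+(-1)^{j+1}K^{\perp}<0$ everywhere, every critical point of a non-constant $C_j$ with $C_j^2<1$ satisfies $C_j\Delta C_j>0$, which is incompatible with the signs of $\Delta C_j$ at the maximum and minimum; hence all critical points have $C_j^2=1$, Morse theory forces $S$ to be a 2-sphere, and this contradicts Theorem \ref{t:3}. Your maximum-principle instinct for Part (2) is in the right spirit, but it must be run on $C_j$ (or on $\log(1-C_j^2)$ with the correct sign and a careful treatment of the $-\infty$ singularities at complex points), not on $\log M_j$.
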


In the next section the basic facts of product geometries are recalled, while the fundamental equations of minimal surfaces are given in Section \ref{s:3}. The proofs of Theorems \ref{t:1} and \ref{t:2} are also contained in Section \ref{s:3}. Section \ref{s:4} considers compact minimal surfaces and contains the proofs of Theorems \ref{t:3}, \ref{t:4}, \ref{t:5} and \ref{t:6}.

\vspace{0.1in}

\section{Preliminaries}

The following are well-known facts regarding Riemannian product manifolds - for the case of 2-dimensional products we follow the notation of \cite{torurb}.

Let $(\Sigma_1, g_1)$ and $(\Sigma_2, g_2)$ be two oriented Riemannian 2-manifolds and for $k\in\{1,2\}$, denote by $j_k$ the complex structure on $\Sigma_k$, defined as the rotation in $T\Sigma_k$ by an angle $+\pi/2$. Set $\omega_k(\cdot, \cdot)=g_k(j_k\cdot, \cdot)$, so that the quadruple $(\Sigma_k,g_k,j_k,\omega_k)$ defines a 2-dimensional K\"ahler manifold. 

Using the following identification, $X\in T(\Sigma_1\times\Sigma_2)\simeq X_1+X_2\in T\Sigma_1\oplus T\Sigma_2,$ where $X_k\in T\Sigma_k$, we obtain the natural splitting $T(\Sigma_1\times\Sigma_2)=T\Sigma_1\oplus T\Sigma_2$. For $(x,y)\in \Sigma_1\times\Sigma_2$ and $X=X_1+X_2,\, Y=Y_1+Y_2$ are tangent vectors in $T_{(x,y)}(\Sigma_1\times\Sigma_2)$, define the product metric $G$ by:
\[
G_{(x,y)}(X,Y)=g_1(X_1(x),Y_1(x))+g_2(X_2(y),Y_2(y)).
\]
The Levi-Civita connection $\nabla$ with respect to the metric $G$ is given by:
\[
\nabla_X Y=D^1_{X_1} Y_1+D^2_{X_2} Y_2,
\]
where $D^1$ and $D^2$ denote the Levi-Civita connections with respect to the metrics $g_1$ and $g_2$, respectively. 

Consider the endomorphisms $J_1,J_2\in \mbox{End}(T\Sigma_1\oplus T\Sigma_2)$ defined by $J_1=j_1\oplus j_2$ and $J_2=-j_1\oplus j_2$. Clearly, $J_1,J_2$ are almost complex structures on $\Sigma_1\times\Sigma_2$. In fact, the following holds:
\begin{Prop}\cite{Ge1}
The almost complex structures $J_1,J_2$ are both integrable and are therefore both complex structures on $\Sigma_1\times\Sigma_2$.
\end{Prop}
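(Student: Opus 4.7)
The plan is to verify integrability via the Newlander--Nirenberg theorem, either by producing explicit holomorphic coordinates or by checking that the Nijenhuis tensor vanishes. Because the product almost complex structures are built by direct sum, both approaches are clean.

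First, recall that any oriented Riemannian 2-manifold admits local isothermal coordinates $(x_k,y_k)$ on $\Sigma_k$ in which $g_k = e^{2\varphi_k}(dx_k^2+dy_k^2)$ and $j_k$ acts as the standard rotation by $\pi/2$, so that $z_k = x_k + i y_k$ is a local holomorphic coordinate on $\Sigma_k$. (Equivalently, in real dimension two the Nijenhuis tensor of any almost complex structure vanishes for dimensional reasons, since both $N_{j_k}(X,Y)$ and $N_{j_k}(X,j_kX)$ are forced to be zero and any two vectors can be written in a basis of this form.) In either formulation, $j_1$ and $j_2$ are themselves integrable complex structures on $\Sigma_1$ and $\Sigma_2$.

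Next I would observe that the almost complex structure $J_1 = j_1 \oplus j_2$ is represented, in product isothermal coordinates $(x_1,y_1,x_2,y_2)$, by a constant-coefficient matrix; hence $(z_1,z_2)$ serves as a holomorphic coordinate chart for $J_1$ on any product patch, and these charts cover $\Sigma_1\times\Sigma_2$. For $J_2 = -j_1 \oplus j_2$, the only difference is that on the first factor we must replace $j_1$ by its conjugate $-j_1$, which amounts to using the coordinate $\bar z_1 = x_1 - i y_1$ in place of $z_1$; then $(\bar z_1, z_2)$ gives a holomorphic atlas for $J_2$. In both cases the transition functions between patches are holomorphic (resp.\ antiholomorphic in the first variable), so by Newlander--Nirenberg both $J_1$ and $J_2$ are integrable.

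Alternatively, one may verify integrability by direct computation of the Nijenhuis tensor. Writing $X = X_1 + X_2$ and $Y = Y_1 + Y_2$ in the splitting $T(\Sigma_1 \times \Sigma_2) = T\Sigma_1 \oplus T\Sigma_2$, the Lie bracket decomposes as $[X,Y] = [X_1,Y_1] + [X_2,Y_2]$ since vector fields on different factors commute. A short calculation then gives
\[
N_{J_1}(X,Y) = N_{j_1}(X_1,Y_1) + N_{j_2}(X_2,Y_2),
\]
and the analogous identity for $J_2$ uses $N_{-j_1} = N_{j_1}$, which is immediate from the definition of the Nijenhuis tensor. Since $N_{j_1}$ and $N_{j_2}$ vanish identically on the 2-dimensional factors, both $N_{J_1}$ and $N_{J_2}$ vanish, and the result follows. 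There is no genuine obstacle here; the only point requiring care is the sign flip defining $J_2$, which is absorbed either by passing to the conjugate coordinate $\bar z_1$ or by the invariance $N_{-j_1} = N_{j_1}$.
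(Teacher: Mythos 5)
Your proof is correct. Note that the paper itself offers no argument for this proposition: it is stated with a citation to \cite{Ge1} and the integrability is simply imported from there, so there is nothing in the text to compare against line by line. What you supply is the standard self-contained argument, and both of your routes work: the isothermal-coordinate route produces the explicit holomorphic atlases $(z_1,z_2)$ for $J_1$ and $(\bar z_1,z_2)$ for $J_2$ (with transition maps holomorphic, respectively anti-holomorphic, in the first variable, hence holomorphic as functions of $\bar z_1$), while the Nijenhuis route uses the vanishing of $N_{j_k}$ in real dimension two together with the splitting of the bracket on a product. The only point worth making explicit in the second route is that the identity $[X,Y]=[X_1,Y_1]+[X_2,Y_2]$ holds for vector fields that are lifts from the factors, not for arbitrary sections of the two subbundles; since $N_{J_i}$ is tensorial, it suffices to evaluate it on such a frame, and this closes the argument. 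The identity $N_{-j_1}=N_{j_1}$ is also correct, as each term of the Nijenhuis tensor is even in $J$.
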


Let $\pi_k:\Sigma_1\times \Sigma_2\rightarrow \Sigma_k$ be the $k$-th projection, and define the following 2-forms 
\[
\Omega_k=(-1)^{k+1}\pi_1^{\ast}\omega_1+\pi_2^{\ast}\omega_2.
\]

In \cite{Ge1} it was proven that $(G,J_1,\Omega_1)$ and  $(G,J_2,\Omega_2)$ define two K\"ahler structures on $\Sigma_1\times\Sigma_2$ such that
\[
G=\Omega_1(J_1\cdot,\cdot)=\Omega_2(J_2\cdot,\cdot).
\] 
Let $P$ be defined by $P=J_1J_2=J_2J_1$. Then $P$ is an almost paracomplex structure on $\Sigma_1\times\Sigma_2$ that is parallel, i.e. $\nabla P=0$ and isometric, i.e. $G(P\cdot,P\cdot)=G(\cdot,\cdot)$. It is not hard to see that the splitting $X=X_1+X_2$, can be explicitly given by
\[
X_1=\frac{X+PX}{2},\qquad X_2=\frac{X-PX}{2}.
\]

Let $F:S\rightarrow M$ be an immersion of an oriented connected surface $S$ in $M$. Define the \emph{K\"ahler functions} $C_k$ on $S$ by:
\[
F^\ast\Omega_k=C_k\omega_S,\qquad k\in\{1,2\},
\]
where $\omega_S$ denotes the area form of $S$.

If $F_k=\pi_k\circ F:S\rightarrow \Sigma_k$ we write $F=(F_1,F_2)$. The \emph{Jacobians} of $F_k$ are defined by
\[
F^\ast_k\omega_k=\mbox{Jac}(F_k)\omega_S.
\]
\begin{Prop}
For $k\in\{1,2\}$ the following hold
\[
{\mbox{Jac}}(F_k)=\frac{C_1+(-1)^{k}C_2}{2}.
\]
\end{Prop}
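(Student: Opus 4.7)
The plan is to obtain a linear $2\times 2$ system relating the two K\"ahler functions $C_1,C_2$ to the two Jacobians $\mathrm{Jac}(F_1),\mathrm{Jac}(F_2)$, and then simply invert it. Everything is formal pullback algebra, and there is really no geometric obstacle.

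First, I would substitute the definition $\Omega_k=(-1)^{k+1}\pi_1^\ast\omega_1+\pi_2^\ast\omega_2$ into $F^\ast\Omega_k$. Using naturality of pullback together with $\pi_k\circ F=F_k$, this yields
\[
F^\ast\Omega_k=(-1)^{k+1}F_1^\ast\omega_1+F_2^\ast\omega_2.
\]
Now invoke the definition of the Jacobians, $F_k^\ast\omega_k=\mathrm{Jac}(F_k)\,\omega_S$, and the definition of $C_k$, namely $F^\ast\Omega_k=C_k\,\omega_S$. Comparing coefficients of the area form $\omega_S$ (valid since $S$ is an oriented $2$-manifold and $\omega_S$ is a nowhere-zero top form) gives the two scalar equations
\[
C_1=\mathrm{Jac}(F_1)+\mathrm{Jac}(F_2),\qquad C_2=-\mathrm{Jac}(F_1)+\mathrm{Jac}(F_2).
\]

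Finally, I would solve this linear system. Adding and subtracting yields
\[
\mathrm{Jac}(F_1)=\frac{C_1-C_2}{2},\qquad \mathrm{Jac}(F_2)=\frac{C_1+C_2}{2},
\]
which is precisely the claimed formula $\mathrm{Jac}(F_k)=\tfrac{1}{2}\bigl(C_1+(-1)^kC_2\bigr)$ for $k=1,2$. The only thing worth being careful about is the sign bookkeeping coming from the factor $(-1)^{k+1}$ in the definition of $\Omega_k$; once that is handled correctly the result is immediate.
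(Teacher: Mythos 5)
Your proposal is correct and follows essentially the same route as the paper: pull back the definition of $\Omega_k$ through $F$, use $\pi_k\circ F=F_k$ to get $C_k=(-1)^{k+1}\mathrm{Jac}(F_1)+\mathrm{Jac}(F_2)$, and invert the resulting $2\times 2$ linear system. The sign bookkeeping is handled correctly, so there is nothing to add.
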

\begin{proof}
From the K\"ahler function's definition we have
\begin{eqnarray}
C_k\omega_S&=&F^\ast\Omega_k\nonumber\\
&=&F^\ast((-1)^{k+1}\pi_1^\ast\omega_1+\pi_2^\ast\omega_2)\nonumber\\
&=&(-1)^{k+1}(F^\ast\circ\pi_1^\ast)\omega_1+(F^\ast\circ\pi_2^\ast)\omega_2\nonumber\\
&=&(-1)^{k+1}F_1^\ast\omega_1+F_2^\ast\omega_2\nonumber\\
&=&((-1)^{k+1}\mbox{Jac}(F_1)+\mbox{Jac}(F_2))\omega_S,\nonumber
\end{eqnarray}
which implies 
\[
C_k=(-1)^{k+1}\mbox{Jac}(F_1)+\mbox{Jac}(F_2),
\]
and the Proposition follows.
\end{proof}
\begin{Prop}\label{p:bothlagrangian}\cite{Ge1}
If $F:S\rightarrow \Sigma_1\times\Sigma_2$ is a Lagrangian immersion of a surface with respect to both symplectic structures $\Omega_1$ and $\Omega_2$, then $F(S)$ is locally the product $\gamma_1\times\gamma_2$, where $\gamma_k$ is a curve of $\Sigma_k$. If furthermore $F$ is minimal, then the curves $\gamma_k$ are geodesics. 
\end{Prop}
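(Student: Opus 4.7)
The plan is to translate the two Lagrangian conditions into vanishing statements for the Jacobians of the projections, and then use the resulting rank restrictions to produce a local product structure. Specifically, Lagrangianity with respect to $\Omega_1$ and $\Omega_2$ is the statement $F^\ast\Omega_1=F^\ast\Omega_2=0$, i.e.\ $C_1=C_2=0$. Plugging into the Jacobian formula of the preceding proposition yields $\mathrm{Jac}(F_1)=\mathrm{Jac}(F_2)=0$, so both $dF_1$ and $dF_2$ have rank at most one at every point of $S$.

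Next I would use the immersion hypothesis to upgrade this to \emph{exactly} rank one. Indeed, if $dF_1$ had rank zero at some $p\in S$, then $dF=dF_1+dF_2$ would factor through $dF_2$, forcing $dF_2$ to have rank two and contradicting $\mathrm{Jac}(F_2)=0$. So on a neighbourhood of any point we obtain two smooth 1-dimensional distributions $\mathcal{D}_1=\ker dF_1$ and $\mathcal{D}_2=\ker dF_2$ on $S$, and they are transverse since a common vector would lie in $\ker dF$. Any 1-dimensional distribution is Frobenius-integrable, and two transverse line fields on a surface admit simultaneously straightening coordinates $(s,t)$ with $\partial_s\in\mathcal{D}_2$ and $\partial_t\in\mathcal{D}_1$. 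In such coordinates $F_1$ depends only on $s$ and $F_2$ only on $t$, so $F(s,t)=(\alpha(s),\beta(t))$ for curves $\alpha\subset\Sigma_1$ and $\beta\subset\Sigma_2$; local image equality $F(S)=\gamma_1\times\gamma_2$ then follows by dimension count.

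For the second assertion I would parametrise $\alpha,\beta$ by arc length so that the induced metric is $ds^2+dt^2$. Using the product connection formula $\nabla_X Y=D^1_{X_1}Y_1+D^2_{X_2}Y_2$ applied to $F_s=(\alpha',0)$ and $F_t=(0,\beta')$, one computes
\[
\nabla_{F_s}F_s=(D^1_{\alpha'}\alpha',0),\qquad \nabla_{F_t}F_t=(0,D^2_{\beta'}\beta'),\qquad \nabla_{F_s}F_t=0.
\]
Since the tangent space to $F(S)$ is spanned by $(\alpha',0)$ and $(0,\beta')$ while the normal bundle is spanned by $(j_1\alpha',0)$ and $(0,j_2\beta')$, the vectors $D^1_{\alpha'}\alpha'=\kappa_\alpha j_1\alpha'$ and $D^2_{\beta'}\beta'=\kappa_\beta j_2\beta'$ are already purely normal. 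Hence
\[
H=\tfrac{1}{2}\bigl(\kappa_\alpha (j_1\alpha',0)+\kappa_\beta(0,j_2\beta')\bigr),
\]
and minimality $H=0$ forces $\kappa_\alpha=\kappa_\beta=0$, so $\alpha$ and $\beta$ are geodesics.

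The main obstacle I anticipate is the careful bookkeeping in Step~2--3: ensuring that the rank is genuinely constant (rather than only generically one) and that the two transverse line fields really admit a simultaneous rectification. Both points are standard but deserve a sentence each. The minimality computation is then a routine application of the product Levi-Civita formula and the identification of the normal bundle with the $j_k$-rotated tangent lines of the factor curves.
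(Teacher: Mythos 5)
Your argument is correct. Note that the paper itself gives no proof of this proposition --- it is imported from \cite{Ge1} --- so there is nothing in the text to compare against; your derivation (vanishing K\"ahler functions $\Rightarrow$ vanishing Jacobians $\Rightarrow$ two transverse rank-one kernels that can be simultaneously rectified, followed by the product-connection computation of $H$) is a complete and self-contained substitute, and each of the two points you flag as needing care (constancy of the rank, simultaneous straightening of the two transverse line fields) is handled adequately by the immersion hypothesis and the standard local normal form for a pair of transverse foliations of a surface.
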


For any points $p\in\Sigma_1$ and $q\in\Sigma_2$ the associated slices are given by
\[
\Sigma_1\times\{q\}=\{(x,q)\in \Sigma_1\times\Sigma_2\,|\, x\in\Sigma_1\},
\]
and
\[
\{p\}\times\Sigma_2=\{(p,y)\in \Sigma_1\times\Sigma_2\,|\, y\in\Sigma_2\}.
\]
The slices are totally geodesic surfaces with $C_1^2=C_2^2=1$. 

\begin{Def}
An immersion $F:S\rightarrow M$ of a submanifold of half the real dimension of a complex manifold $(M,J)$ is said to be a \emph{complex submanifold} if $JTS=TS$. In the case where $M$ is of dimension 4, the complex surface is called a \emph{complex curve}.
\end{Def}

The following proposition classifies all surfaces that are complex curves with respect to both complex structures $J_1$ and $J_2$:
\begin{Prop}\label{p:totallygeodesics}
If $F:S\rightarrow \Sigma_1\times\Sigma_2$ is a complex immersion of a surface with respect to both complex structures $J_1$ and $J_2$, then $F(S)$ is locally the slice $\Sigma_1\times\{q\}$ or $\{p\}\times\Sigma_2$, for some points $p\in\Sigma_1$ and $q\in\Sigma_2$. 
\end{Prop}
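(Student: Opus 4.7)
The plan is to reduce this to a pointwise linear-algebra statement about the paracomplex structure $P=J_1J_2$, then globalize by connectedness. From $J_1=j_1\oplus j_2$ and $J_2=-j_1\oplus j_2$ one computes $P(X_1+X_2)=X_1-X_2$ (consistent with the formula $X_2=(X-PX)/2$ recorded in the preliminaries), so $P$ is an involution whose $+1$ and $-1$ eigenspaces are exactly $T\Sigma_1$ and $T\Sigma_2$.

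Assume $J_1(TS)=J_2(TS)=TS$; then $P(TS)=TS$. Fix $p\in S$. Since $P|_{T_pS}$ is an involution on a $2$-dimensional space, it is diagonalizable with eigenvalues in $\{-1,+1\}$, and the only possibilities for $dF_p(T_pS)$ as a subspace of $T_{F(p)}\Sigma_1\oplus T_{F(p)}\Sigma_2$ are: (i) contained in $T\Sigma_1$, (ii) contained in $T\Sigma_2$, or (iii) a direct sum $L_1\oplus L_2$ with each $L_k$ a real line in $T\Sigma_k$. I would rule out (iii) by noting that $J_1$-invariance of $TS$ would then force $j_1L_1=L_1$, which is impossible since $j_1$ is rotation by $\pi/2$ and admits no real invariant line. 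Thus at every point one of (i), (ii) holds.

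To globalize, I would consider $A_k=\{p\in S : dF_p(T_pS)\subset T_{F(p)}\Sigma_k\}$ for $k=1,2$. These sets are closed (the condition that a tangent plane lie in a prescribed distribution is closed), they cover $S$ by the previous step, and they are disjoint because $T\Sigma_1\cap T\Sigma_2=\{0\}$ rules out any $2$-plane lying in both. Connectedness of $S$ then forces one of them to be empty. If, say, $A_2=\emptyset$, then $dF_2\equiv 0$ on $S$, so $F_2$ is locally constant and $F(S)$ lies in a slice $\Sigma_1\times\{q\}$; since $F$ is an immersion between $2$-manifolds it is a local diffeomorphism onto the slice, giving the conclusion. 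The symmetric case gives the other slice.

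The only non-trivial step is the elimination of case (iii), which is where the rigidity coming from having \emph{two} compatible complex structures is used; everything else is routine.
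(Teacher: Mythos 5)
Your argument is correct, and it takes a genuinely different route from the paper. The paper works in isothermal coordinates: compatibility of $J_k$ with the induced metric forces $J_1F_x=\pm F_y$ and $J_2F_x=\pm F_y$, and comparing the resulting componentwise equations $j_k(F_k)_x=\pm(F_k)_y$ shows the two sign choices conflict on one factor, forcing $(F_1)_x=(F_1)_y=0$ or $(F_2)_x=(F_2)_y=0$. You instead exploit the paracomplex structure $P=J_1J_2=\mathrm{Id}\oplus(-\mathrm{Id})$, which the paper introduces in the preliminaries but does not use here: $P$ preserves $TS$, is an involution, hence diagonalizable on each $T_pS$ with eigenspaces $T_pS\cap T\Sigma_1$ and $T_pS\cap T\Sigma_2$, and the mixed case $L_1\oplus L_2$ is killed because a $J_1$-invariant $TS$ would force $j_1L_1=L_1$, impossible since $j_1^2=-\mathrm{Id}$ admits no real eigenvector. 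Your elimination of case (iii) is airtight ($T_pS\cap T\Sigma_1=L_1$ does follow from $T\Sigma_1\cap T\Sigma_2=\{0\}$), and the open-closed argument correctly upgrades the pointwise dichotomy to a global one on the connected surface $S$ --- slightly more than the stated local conclusion requires. What your approach buys is coordinate-independence and economy of hypotheses: you never invoke the metric or the compatibility condition, only $j_k^2=-\mathrm{Id}$ and the eigenspace structure of $P$, whereas the paper's computation is tied to isothermal frames but has the side benefit of establishing the normalization $J_kF_x=\pm F_y$ that recurs in its later calculations.
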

\begin{proof}
Let $(x,y)$ be isothermal local coordinates of $(S,g)$. Since $F$ is a complex curve with respect to $J_1$ we have
\[
J_1F_x=a_1F_x+a_2F_y\qquad J_1F_y=b_1F_x+b_2F_y.
\]
Using the fact that $J_1^2=-Id$ we have 
\[
b_2=-a_1\qquad a_1^2+a_2b_1=-1.
\]
The compatibility condition $g(X,Y)=g(J_1X,J_1Y)$ for tangential vector fields $X,Y$ yields $J_1F_x=\pm F_y$. Assume without loss of generality that
\begin{equation}\label{e:comfx}
J_1F_x=F_y=((F_1)_y,(F_2)_y).
\end{equation}
On the other hand,
\[
J_1F_x=(j_1(F_1)_x,j_2(F_2)_x),
\]
and using equation (\ref{e:comfx}) we get
\begin{equation}\label{e:comfx1}
j_1(F_1)_x=(F_1)_y\qquad j_2(F_2)_x=(F_2)_y.
\end{equation}
The immersion $F$ is a complex curve with respect to $J_2$ and a similar argument yields $J_2F_x=\pm F_y$ and thus,
\[
j_1(F_1)_x=\mp (F_1)_y\qquad j_2(F_2)_x=\pm (F_2)_y.
\]
In case where $J_2F_x=F_y$ we have
\[
j_1(F_1)_x=-(F_1)_y\qquad j_2(F_2)_x=(F_2)_y,
\]
and together with equation (\ref{e:comfx1}) we obtain $(F_1)_x=(F_1)_y=0$ and therefore, $F_1(x,y)=p$ for some $p\in\Sigma_1$. 

Similar argument for the case $J_2F_x=-F_y$ implies that $F_2(x,y)=q$ for some $q\in\Sigma_2$ and the proposition follows.
\end{proof}
Let $F:S\rightarrow \Sigma_1\times\Sigma_2$  be an immersion of an oriented surface $S$ and denote by $g$ the induced metric $F^\ast G$.
Choose orthonormal frames $(s_1,s_2)$ and $(v_1,v_2)$ of $(\Sigma_1,g_1)$ and $(\Sigma_2,g_2)$, respectively, so that 
\[
j_1s_1=s_2\qquad j_2v_1=v_2.
\]
Suppose the derivative of $F_1$ is given by
\[
dF_1(e_1)=\lambda_1s_1+\lambda_2s_2,\qquad
dF_1(e_2)=\mu_1s_1+\mu_2s_2,
\]
and the derivative of $F_2$ by 
\[
dF_2(e_1)=\bar\lambda_1v_1+\bar\lambda_2v_2,\qquad
dF_2(e_2)=\bar\mu_1v_1+\bar\mu_2v_2.
\]

For $j\in\{1,2\}$, denote by $K_j:\Sigma_i\rightarrow {\mathbb R}$ the Gauss curvature of $(\Sigma_j,g_j)$. Then,
\[
R_1(s_1,s_2,s_2,s_1):=g_1(R_1(s_1,s_2)s_2,s_1)=K_1,
\]
and 
\[
R_2(v_1,v_2,v_2,v_1):=g_2(R_2(v_1,v_2)v_2,v_1)=K_2,
\]
where, $R_1$ and $R_2$ denote the corresponding Riemann curvature tensors. If $\bar R$ denotes the Riemann curvature tensor of the ambient metric $G$, and using the identification of $e_k$ with $dF(e_k)$ and $K_j$ with the real function $K_j\circ F_j$, we then have
\[
\bar R(e_1,e_2,e_2,e_1)=K_1(\lambda_1\mu_2-\lambda_2\mu_1)^2+K_2(\bar\lambda_1\bar\mu_2-\bar\lambda_2\bar\mu_1)^2.
\]

On the other hand,
\begin{eqnarray}
C_1&=&\omega_s(e_1,e_2)C_1\nonumber \\
&=&F^\ast\Omega_1(e_1,e_2)\nonumber \\
&=&\Omega_1((dF_1(e_1),dF_2(e_1)),(dF_1(e_2),dF_2(e_2)))\nonumber\\
&=&\omega_1(dF_1(e_1),dF_1(e_2))+\omega_2(dF_2(e_1),dF_2(e_2))\nonumber\\
&=&\omega_1(\lambda_1s_1+\lambda_2s_2,\mu_1s_1+\mu_2s_2)+\omega_2(\bar\lambda_1v_1+\bar\lambda_2v_2,\bar\mu_1v_1+\bar\mu_2v_2)\nonumber\\
&=&\lambda_1\mu_2-\lambda_2\mu_1+\bar\lambda_1\bar\mu_2-\bar\lambda_2\bar\mu_1.\nonumber
\end{eqnarray}
Similar calculations show 
\[
C_2=-\lambda_1\mu_2+\lambda_2\mu_1+\bar\lambda_1\bar\mu_2-\bar\lambda_2\bar\mu_1.
\]
Thus, 
\[
\bar R(e_1,e_2,e_2,e_1)=K_1\left(\frac{C_1-C_2}{2}\right)^2+K_2\left(\frac{C_1+C_2}{2}\right)^2.
\]
If $K$ denotes the Gauss curvature of $(S,g)$, the Gauss equation gives:
\begin{equation}\label{e:gausseqn}
K=K_1\left(\frac{C_1-C_2}{2}\right)^2+K_2\left(\frac{C_1+C_2}{2}\right)^2+2H^2-|h|^2/2,
\end{equation}
where $h,H$ are respectively the second fundamental form and the mean curvature of $F$.

Consider now isothermal coordinates $(x,y)$ of $(S,g)$, so that 
\[
g(F_x,F_x)=g(F_y,F_y)=e^{2u},\qquad g(F_x,F_y)=0,
\]
and $j$ is the associated complex structure so that $jF_x=F_y$. Note that $u=u(x,y)$ is a function defined locally on $S$. Consider an orthonormal frame $(N,\bar N)$ of the normal bundle such that $(F_x,F_y,N,\bar N)$ is an oriented orthonormal frame in $F^\ast T(\Sigma_1\times\Sigma_2)$. Let $z=x+iy$, and consider the complexified vector fields 
\[
F_z=\frac{1}{2}\left(F_x-iF_y\right)\qquad F_{\bar z}=\frac{1}{2}\left(F_x+iF_y\right).
\]
Then,
\[
G(F_z,F_z)=0\qquad G(F_z,F_{\bar z})=\frac{e^{2u}}{2}.
\]
If $\xi=\frac{N-i\bar N}{\sqrt{2}}$, we have that
\[
G(\xi,\xi)=0\qquad G(\xi,\bar\xi)=1.
\]
A brief calculation gives
\begin{equation}\label{e:complexcu1}
J_1F_z=iC_1F_z+\gamma_1\xi,
\end{equation}
\begin{equation}\label{e:complexcu2}
J_2F_z=iC_2F_z+\gamma_2\bar\xi,
\end{equation}
where $\gamma_1,\gamma_2:S\rightarrow {\mathbb C}$ are local complex functions on $S$ defined by
\[
\gamma_1=G(J_1F_z,\bar\xi)\qquad \gamma_2=G(J_2F_z,\xi).
\]
The fact that $J_k$ is $G$-compatible, yields
\begin{equation}\label{e:gamma1}
|\gamma_k|^2=\frac{e^{2u}(1-C_k^2)}{2}.
\end{equation}
From the above equations, one can see that $-1\leq C_k\leq 1$.

\begin{Prop}
The following statements are equivalent:
\begin{enumerate}
\item The immersion $F:S\rightarrow\Sigma_1\times\Sigma_2$ is a complex curve with respect to $J_k$.
\item The complex function $\gamma_k$ vanishes.
\item $C_k^2=1$. 
\end{enumerate}
\end{Prop}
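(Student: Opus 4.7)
The plan is to prove the proposition by first establishing (2)~$\Leftrightarrow$~(3) directly from a pointwise norm identity, and then showing (1)~$\Leftrightarrow$~(2) by unpacking the $J_k$-invariance of the tangent plane in isothermal coordinates. The equivalence (2)~$\Leftrightarrow$~(3) is immediate from the identity $|\gamma_k|^2 = e^{2u}(1 - C_k^2)/2$ recorded in (\ref{e:gamma1}): since $e^{2u} > 0$, the function $\gamma_k$ vanishes identically if and only if $C_k^2 = 1$.

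For the direction (2)~$\Rightarrow$~(1), I would assume $\gamma_k = 0$, so that the structure equation (\ref{e:complexcu1}) (or (\ref{e:complexcu2})) collapses to $J_k F_z = iC_k F_z$. Writing $F_z = \tfrac{1}{2}(F_x - iF_y)$ and taking real and imaginary parts --- using that $J_k$ is real-linear and that $F_{\bar z} = \overline{F_z}$ --- yields $J_k F_x = C_k F_y$ and $J_k F_y = -C_k F_x$, both manifestly tangent to $S$. Hence $J_k T_pS \subseteq T_pS$ at every point $p$, which is the defining condition of a $J_k$-complex curve. As a byproduct, $J_k^2 = -\mathrm{Id}$ forces $C_k^2 = 1$, which is consistent with the previous equivalence.

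For the reverse direction (1)~$\Rightarrow$~(2), suppose $J_k TS = TS$. Then $J_k|_{TS}$ is an almost complex structure on the oriented real $2$-plane $TS$, so it must equal $\pm j$, where $j$ is the induced complex structure on $S$. This gives $J_k F_z = \pm iF_z$, and comparing with the relevant structure equation forces the normal term $\gamma_k \xi$ (respectively $\gamma_k \bar\xi$) to vanish; since $\xi$ and $\bar\xi$ are nowhere-vanishing sections of the complexified normal bundle, this yields $\gamma_k = 0$. The only bookkeeping subtlety is the $\xi$-versus-$\bar\xi$ distinction between (\ref{e:complexcu1}) and (\ref{e:complexcu2}), but it does not affect the argument. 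I do not anticipate any substantive obstacle: the proposition essentially repackages the structure equations for $J_kF_z$ together with the norm formula for $\gamma_k$.
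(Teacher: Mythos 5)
Your proposal is correct and follows essentially the same route as the paper: the equivalence (2)\,$\Leftrightarrow$\,(3) from the norm identity (\ref{e:gamma1}), and (1)\,$\Leftrightarrow$\,(2) from the structure equations (\ref{e:complexcu1})--(\ref{e:complexcu2}) together with the definition of a complex curve. You simply spell out the details (real/imaginary parts, the $\pm j$ identification) that the paper leaves implicit; for (1)\,$\Rightarrow$\,(2) one can also note directly that $\gamma_k = G(J_kF_z,\bar\xi)$ vanishes as soon as $J_kF_z$ is tangent.
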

\begin{proof}
The definition of a complex curve with the relations (\ref{e:complexcu1}) and (\ref{e:complexcu2}) shows that statements (1) and (2) are equivalent. On the other hand, the relation (\ref{e:gamma1}) shows that statements (2) and (3) are also equivalent and the proposition follows.
\end{proof}

\vspace{0.1in}


\section{Fundamental Equations of Minimal surfaces}\label{s:3}

Considering the Levi-Civita connection $\nabla$ of $(\Sigma_1\times\Sigma_2,G)$, define 
\[
F_{zz}:=\nabla_{F_z}F_z\qquad F_{z\bar z}:=\nabla_{F_{\bar z}}F_z= F_{\bar z z}\qquad  F_{\bar z\bar z}:=\nabla_{F_{\bar z}}F_{\bar z}.
\]
The Frenet equations of a minimal immersion $F$ are given by
\begin{eqnarray}
F_{zz}&=&2u_zF_z+f_1\xi+f_2\bar\xi\label{e:frenet1}\\
F_{z\bar z}&=&0\label{e:frenet2}\\
\xi_z&=&-2e^{-2u}f_2F_{\bar z}+A\xi\label{e:frenet3}\\
\bar\xi_z&=&-2e^{-2u}f_1F_{\bar z}-A\bar\xi\label{e:frenet4}
\end{eqnarray}
where $A,f_1,f_2$ are the complex functions on $S$, defined by 
\[
A=-G(\xi,\bar\xi_z)\qquad f_1=G(F_{zz},\bar\xi)\qquad f_2=G(F_{zz},\xi).
\]
For later use, we need the following expressions
\[
J_1\xi=-2e^{-2u}\bar\gamma_1F_z-iC_1\xi,
\]
and
\[
J_2\xi=-2e^{-2u}\bar\gamma_2F_{\bar z}+iC_2\xi.
\]

\begin{Def}
    Given a pair $(F,\xi)$ of a map and a complex normal frame, we call the triple $(A,\gamma_j,f_j)$ the \emph{fundamental data} of the pair.
\end{Def}

If $\{\xi^\ast,\bar\xi^\ast\}$ is another orthonormal oriented frame of the complexified normal bundle then $\xi^\ast=e^{i\theta}\xi$, for some real function $\theta$ on $S$ and let $(A^\ast,\gamma^\ast_j,f^\ast_j)$ be the fundamental data for the pair $(F,\xi^\ast)$. The fundamental data are related by
\[
\gamma_1^\ast=e^{-i\theta}\gamma_1\qquad \gamma_2^\ast=e^{i\theta}\gamma_2\qquad f_1^\ast=e^{-i\theta}f_1\qquad f_2^\ast=e^{i\theta}f_2\qquad A^\ast=i\theta_z+A.
\]
Thus $|\gamma_k|^2$ and $|f_k|^2$ are independent of the orthonormal frame choice of the normal bundle since,
$|\gamma^\ast_k|^2=|\gamma_k|^2$ and $|f^\ast_k|^2=|f_k|^2$.

For simplicity we introduce the real functions $M_k$ on $S$, where $k\in \{1,2\}$:
\begin{equation}\label{e:simplicity}
M_k=(-1)^{k+1}\frac{C_1-C_2}{2}K_1+\frac{C_1+C_2}{2}K_2.
\end{equation}

\begin{Prop}\label{p:derivatives}
Let $F:S\rightarrow \Sigma_1\times\Sigma_2$ be a minimal immersion of an orientable surface $S$. Then
\begin{eqnarray}
(C_k)_z&=&2ie^{-2u}\bar\gamma_kf_k,\label{e:cz}\\
(f_k)_{\bar z}&=&(-1)^{k+1}\bar Af_k+\frac{ie^{2u}\gamma_kM_k}{4},\label{e:f1z}\\
(\gamma_k)_{\bar z}&=&(-1)^{k+1} \bar A\gamma_k,\label{e:gamma1z}
\end{eqnarray}
where,
\begin{equation}\label{e:definofa}
A=2u_z-\frac{2iC_1f_1+(\gamma_1)_z}{\gamma_1}=-2u_z+\frac{2iC_2f_2+(\gamma_2)_z}{\gamma_2},
\end{equation}
provided that $\gamma_1\gamma_2\neq 0$.
\end{Prop}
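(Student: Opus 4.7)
The plan is to derive all six identities by differentiating either the K\"ahler relations (\ref{e:complexcu1})--(\ref{e:complexcu2}) or the first Frenet equation (\ref{e:frenet1}), and then reading off coefficients in the frame $\{F_z,F_{\bar z},\xi,\bar\xi\}$. Two preliminary facts are used throughout. First, since the Levi-Civita connection is real, extending it complex linearly gives $\xi_{\bar z}=\overline{\bar\xi_z}=-2e^{-2u}\overline{f_1}F_z-\bar A\,\xi$ and $\bar\xi_{\bar z}=\overline{\xi_z}=-2e^{-2u}\overline{f_2}F_z+\bar A\,\bar\xi$ from (\ref{e:frenet3})--(\ref{e:frenet4}). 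Second, each $J_k$ is parallel, so $\nabla_X(J_kY)=J_k\nabla_XY$.

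For (\ref{e:cz}) and (\ref{e:gamma1z}), I would apply $\nabla_{F_{\bar z}}$ to $J_kF_z=iC_kF_z+\gamma_k\eta_k$, where $\eta_1=\xi$ and $\eta_2=\bar\xi$. The left side equals $J_kF_{z\bar z}$, which vanishes by the minimality condition (\ref{e:frenet2}). Expanding the right side using the formulas above for $\xi_{\bar z}$ and $\bar\xi_{\bar z}$ and equating the $F_z$-coefficient and the $\eta_k$-coefficient to zero yields (\ref{e:cz}) and (\ref{e:gamma1z}); the sign $(-1)^{k+1}$ appears because the coefficient of the normal frame vector flips between $\xi_{\bar z}$ and $\bar\xi_{\bar z}$.

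For the two expressions for $A$ in (\ref{e:definofa}), I would apply $\nabla_{F_z}$ to the same K\"ahler relations. The left side becomes $J_kF_{zz}$; expanding $F_{zz}$ by (\ref{e:frenet1}) and substituting the stated expressions for $J_k\xi$ and $J_k\bar\xi$ (and their conjugates) expresses $J_kF_{zz}$ in the frame. Comparing the coefficient of $\xi$ (for $k=1$) or of $\bar\xi$ (for $k=2$) produces the two expressions for $A$, while the $F_z$-coefficient reproduces (\ref{e:cz}) as a consistency check.

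The bulk of the work lies in (\ref{e:f1z}). The pullback Ricci identity, combined with $[\partial_z,\partial_{\bar z}]=0$ on $S$ and $F_{z\bar z}=0$, reduces to $\nabla_{\partial_{\bar z}}F_{zz}=R^G(F_{\bar z},F_z)F_z$. Differentiating (\ref{e:frenet1}) and substituting $\xi_{\bar z},\bar\xi_{\bar z}$, the $\xi$-coefficient of the left side is $(f_1)_{\bar z}-\bar Af_1$ and the $\bar\xi$-coefficient is $(f_2)_{\bar z}+\bar Af_2$, which already accounts for the $(-1)^{k+1}\bar A f_k$ term. The right side is handled through the product decomposition $R^G=R_1\oplus R_2$ and the two-dimensional identity $R_k(X,Y)Z=K_k[g_k(Y,Z)X-g_k(X,Z)Y]$, with the splitting $F_{kz}=\tfrac12(F_z+(-1)^{k+1}PF_z)$ obtained from the paracomplex operator $P=J_1J_2$; applying $J_1$ to (\ref{e:complexcu2}) gives the explicit expansion
\[
PF_z=-C_1C_2\,F_z-2e^{-2u}\gamma_1\gamma_2\,F_{\bar z}+iC_2\gamma_1\,\xi+iC_1\gamma_2\,\bar\xi,
\]
from which closed formulas for $g_k(F_{kz},F_{kz})$, $g_k(F_{kz},F_{k\bar z})$, $G(F_{kz},\xi)$, and $G(F_{kz},\bar\xi)$ follow. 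The main obstacle is the ensuing algebra: after using $|\gamma_k|^2=e^{2u}(1-C_k^2)/2$ to collapse polynomials in $C_1,C_2$ (the key identity being $(1-C_1C_2)^2+(1-C_1^2)(1-C_2^2)+C_1^2+C_2^2-2C_1^2C_2^2=2(1-C_1C_2)$), the $\bar\xi$-component of $R^G(F_{\bar z},F_z)F_z$ reduces exactly to $ie^{2u}\gamma_1M_1/4$ and the $\xi$-component to $ie^{2u}\gamma_2M_2/4$, matching the $M_k$ defined in (\ref{e:simplicity}). The step is purely computational but sign- and index-sensitive, which is why the formulas are displayed together rather than derived separately for $k=1$ and $k=2$.
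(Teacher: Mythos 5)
Your proposal is correct and follows essentially the same route as the paper: the identities (\ref{e:cz}), (\ref{e:gamma1z}) and the two expressions for $A$ in (\ref{e:definofa}) come from differentiating $J_kF_z=iC_kF_z+\gamma_k\eta_k$ using $\nabla J_k=0$ together with the Frenet equations, and (\ref{e:f1z}) comes from computing $\bar R(F_z,F_{\bar z})F_z$ both from the Frenet equations and from the product decomposition of the curvature via the paracomplex operator $P$. The only cosmetic difference is that you read off (\ref{e:gamma1z}) directly as the normal coefficient of $\nabla_{F_{\bar z}}(J_kF_z)=0$, whereas the paper obtains it by differentiating $|\gamma_k|^2=e^{2u}(1-C_k^2)/2$ and invoking (\ref{e:definofa}).
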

\begin{proof}
The fact that $\nabla_{F_z}J_kF_z=J_kF_{zz}$ implies equations (\ref{e:cz}) and (\ref{e:definofa}). Differentiating equation (\ref{e:gamma1}) with respect to $z$ and using equation (\ref{e:definofa}), we obtain equation (\ref{e:gamma1z}).
Calculating the Riemann curvature tensor $\bar R$ of $G$, we have 
\begin{equation}\label{e:curv1}
\bar R(F_z,F_{\bar z})F_z=(-2u_{z\bar z}+2e^{-2u}|f_1|^2+2e^{-2u}|f_2|^2)F_z+(\bar Af_1-(f_1)_{\bar z})\xi -(\bar Af_2-(f_2)_{\bar z})\bar\xi.
\end{equation}
But on the other hand, we obtain
\begin{eqnarray}
\bar R(F_z,F_{\bar z})F_z&=& R_1((F_1)_z,(F_1)_{\bar z})(F_1)_z+ R_2((F_2)_z,(F_2)_{\bar z})(F_2)_z\nonumber\\
&=& K_1[g_1((F_1)_{\bar z},(F_1)_z)(F_1)_z-g_1((F_1)_z,(F_1)_z)(F_1)_{\bar z}]\nonumber\\
& & \qquad\qquad
+K_2[g_2((F_2)_{\bar z},(F_2)_z)(F_2)_z-g_2((F_2)_z,(F_2)_z)(F_2)_{\bar z}]\nonumber
\end{eqnarray}
\[
\qquad=\frac{K_1}{8}\left[G\left(F_{\bar z}+PF_{\bar z},F_z+PF_z\right)(F_z+PF_z)-
G\left(F_z+PF_z,F_z+PF_z\right)(F_{\bar z}+PF_{\bar z})
\right]
\]
\[
\qquad\quad+\frac{K_2}{8}\left[G\left(F_{\bar z}-PF_{\bar z},F_z-PF_z\right)(F_z-PF_z)-
G\left(F_z-PF_z,F_z-PF_z\right)(F_{\bar z}-PF_{\bar z})\right],
\]
implying that
\[
\bar R(F_z,F_{\bar z})F_z=\frac{K_1e^{2u}}{8}\left[((1-C_1C_2)^2-4e^{-4u}|\gamma_1|^2|\gamma_2|^2)F_z+i\gamma_1(C_2-C_1)\xi+i\gamma_2(C_1-C_2)\bar\xi\right]
\]
\[
\qquad\qquad\quad+\frac{K_2e^{2u}}{8}\left[((1+C_1C_2)^2-4e^{-4u}|\gamma_1|^2|\gamma_2|^2)F_z-i\gamma_1(C_1+C_2)\xi-i\gamma_2(C_1+C_2)\bar\xi\right].
\]
Thus
\begin{eqnarray}
\bar R(F_z,F_{\bar z})F_z&=&\frac{e^{2u}}{8}[K_1(C_1-C_2)^2+K_2(C_1+C_2)^2]F_z\nonumber \\
&&\qquad +\frac{i\gamma_1 e^{2u}}{8}[K_1(C_2-C_1)-K_2(C_1+C_2)]\xi\nonumber \\
&&\qquad\qquad +\frac{i\gamma_2 e^{2u}}{8}[K_1(C_1-C_2)-K_2(C_1+C_2)]\bar\xi.\label{e:curv2}
\end{eqnarray}
Finally, equation (\ref{e:f1z}) is obtained by equating equations (\ref{e:curv1}) and (\ref{e:curv2}).
\end{proof}

\vspace{0.1in}

Consider Euclidean 3-space ${\mathbb R}^3$ endowed with the flat metric $\left<.,.\right>_q$of signature $p\times (3-q)$ where $q\in\{0,1,2,3\}$. Let ${\mathbb S}_q^2$ be the 2-dimensional real space form defined by 
\[
{\mathbb S}_q^2=\{x\in {\mathbb R}^3\,|\, \left<x,x\right>_q=1\}.
\]
Regarding minimal surfaces in the product of 2-dimensional real space forms, we have the following:

\begin{Prop}
Let $S$ be a simply connected 2-manifold, $u,C_j$ be real smooth functions on $S$ with $C^2\leq 1$ where $C^2=1$ in at most isolated points of $S$ and $A,\gamma_j,f_j$ be smooth complex  functions on $S$ satisfying equation (\ref{e:gamma1}) and  Proposition \ref{p:derivatives}. Then there exists a (unique up to isometry) non-complex minimal immersion $F:S\rightarrow  {\mathbb S}_p^2\times  {\mathbb S}_p^2$ and an orthonormal reference $\{\xi,\bar\xi\}$ of the complexified normal bundle whose fundamental data are $(A,\gamma_j,f_j)$.  
\end{Prop}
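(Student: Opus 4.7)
The plan is a Bonnet-type moving-frame integration. View $\mathbb{S}^2_p\times\mathbb{S}^2_p$ as a codimension-two submanifold of $\mathbb{R}^3\oplus\mathbb{R}^3$ with the product inner product of signature determined by $p$, and write $F=(F_1,F_2)$ with $F_k:S\to\mathbb{R}^3$. Augmenting the complexified surface frame $(F_z,F_{\bar z},\xi,\bar\xi)$ with the two position vectors $F_1,F_2$ produces a moving frame $\Psi$ of the trivial complex rank-$6$ bundle over $S$. The Frenet equations (\ref{e:frenet1})--(\ref{e:frenet4}) for the surface frame, the Gauss--Weingarten relation $(F_k)_{z\bar z}^{\mathbb{R}^3}=-\left<(F_k)_z,(F_k)_{\bar z}\right>_p F_k$ for $\mathbb{S}^2_p\hookrightarrow\mathbb{R}^3$, and the algebraic formulas $J_1F_z=iC_1F_z+\gamma_1\xi$ and $J_2F_z=iC_2F_z+\gamma_2\bar\xi$ (which via $F_k=(F\pm PF)/2$ determine the $\mathbb{R}^3$-components of $F_z$) assemble into a linear first-order PDE system $\partial_z\Psi=U\Psi$, $\partial_{\bar z}\Psi=V\Psi$ whose matrix coefficients $U,V$ are built entirely from the prescribed data $u,C_j,\gamma_j,f_j,A$.

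First I fix a basepoint $z_0\in S$ and prescribe admissible initial values: points $F_k(z_0)\in\mathbb{S}^2_p$, oriented $g_k$-orthonormal bases of $T_{F_k(z_0)}\mathbb{S}^2_p$ realizing $F_z(z_0),F_{\bar z}(z_0)$ compatibly with $u(z_0)$ and $C_j(z_0)$, and a unit complex normal $\xi(z_0)$ realizing $\gamma_j(z_0)$. Any two admissible initial data differ by an isometry of $\mathbb{S}^2_p\times\mathbb{S}^2_p$, which will account for the uniqueness-up-to-isometry statement. Since $S$ is simply connected, the Frobenius/Poincar\'e theorem produces a global solution $\Psi$ exactly when the zero-curvature condition $\partial_{\bar z}U-\partial_zV+[U,V]=0$ holds.

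The principal obstacle is verifying this zero-curvature condition. Expanding the mixed-partial commutator and matching coefficients of $F_z,F_{\bar z},\xi,\bar\xi$ row by row, the $F_z$-block reproduces equation (\ref{e:cz}) and the consistency identity (\ref{e:definofa}), the $\xi$-block reproduces (\ref{e:f1z}) and (\ref{e:gamma1z}) for $j=1$ (using the ambient curvature expansion already carried out in (\ref{e:curv1})--(\ref{e:curv2})), and the $\bar\xi$-block gives the $j=2$ companions via the $J_2$-formula. The blocks involving the position vectors $F_k$ are automatic from the $\mathbb{S}^2_p\hookrightarrow\mathbb{R}^3$ Gauss--Weingarten form of $U,V$. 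The constraint (\ref{e:gamma1}) plays the role of the algebraic compatibility among $u$, $C_j$ and $\gamma_j$ needed for $\xi$ to remain a unit complex normal. The verification is direct but lengthy, and is the point where all the identities in Proposition \ref{p:derivatives} are simultaneously consumed.

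Once $\Psi$ is integrated, the resulting $F=(F_1,F_2)$ takes values in $\mathbb{S}^2_p\times\mathbb{S}^2_p$: the scalars $\left<F_k,F_k\right>_p-1$ and the pairwise inner products of $F_k,(F_k)_z,(F_k)_{\bar z},\xi,\bar\xi$ obey a closed homogeneous linear ODE system along any path from $z_0$, with the correct initial values, hence evolve to the correct constants. Conformality and minimality follow from $G(F_z,F_z)=0$ and $F_{z\bar z}=0$, which are structurally built into $U,V$ via (\ref{e:frenet1})--(\ref{e:frenet2}). The hypothesis $C_j^2\leq 1$ with equality only on an isolated set, together with (\ref{e:gamma1}), forces $\gamma_j\not\equiv 0$, so by the equivalence of the complex-curve condition with $\gamma_k\equiv 0$ established earlier, $F$ is non-complex with respect to each $J_k$. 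Finally, reading off the coefficients in the reconstructed Frenet system shows that the fundamental data of $F$ coincide with the prescribed $(A,\gamma_j,f_j)$.
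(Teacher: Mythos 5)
Your proposal is correct and follows essentially the same route as the paper: both extend the frame to the ambient $\mathbb{R}^3\times\mathbb{R}^3$ (your position vectors $F_1,F_2$ span the same normal plane as the paper's $\Phi$ and $\hat\Phi=(\Phi_1,-\Phi_2)$), write the enlarged Frenet system, and integrate it on the simply connected $S$ via the zero-curvature/Frobenius argument, with the identities of Proposition \ref{p:derivatives} and equation (\ref{e:gamma1}) serving as the compatibility conditions and uniqueness up to isometry coming from the choice of admissible initial data. The paper delegates the integration step to Proposition 2 of \cite{torurb}, whereas you spell it out, but the content is the same.
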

\begin{proof}
If $F:S\rightarrow {\mathbb S}_q^2\times {\mathbb S}_q^2$ is a minimal immersion. then it can be extended to an immersion $\Phi:S\rightarrow {\mathbb R}^3\times {\mathbb R}^3:x\mapsto (\Phi_1(x),\Phi_2(x))$, so that $\Phi=\iota\circ F$, where $\iota$ is the inclusion map of ${\mathbb S}_q^2$ into  $({\mathbb R}^3, \left<.,.\right>_q)$. Let $\hat\Phi:S\rightarrow {\mathbb R}^3\times {\mathbb R}^3$ be a mapping defined by $\hat\Phi=(\Phi_1,-\Phi_2)$. Given the product metric $\left<\left<.,.\right>\right>_q:=\pi_1^\ast\left<.,.\right>_q+\pi_2^\ast\left<.,.\right>_q$, where $\pi_k:{\mathbb R}^3\times {\mathbb R}^3\rightarrow {\mathbb R}^3:(x_1,x_2)\mapsto x_k$ be the $k$-projection, then $\left<\left<\Phi,\Phi\right>\right>_q=\left<\left<\hat\Phi,\hat\Phi\right>\right>_q=2$ with $\left<\left<\Phi,\hat\Phi\right>\right>_q=0$, and therefore, $\{\Phi,\hat\Phi\}$ form an orthogonal reference along $\Phi$ of the normal bundle of ${\mathbb S}_q^2\times {\mathbb S}_q^2$ in ${\mathbb R}^6$. If $z=x+iy$ is a local isothermal parameter of $S$ then $\left<\left<\Phi_z,\Phi_z\right>\right>_q=0$ and $\left<\left<\Phi_z,\Phi_{\bar z}\right>\right>_q^2=\frac{e^{2u}}{2}$.

Note that $\hat\Phi_z=J_1J_2\Phi_z$. In particular we have
\[
\hat\Phi_z=-C_1C_2\Phi_z-2e^{-2u}\gamma_1\gamma_2\Phi_{\bar z}+iC_2\gamma_1\xi+iC_1\gamma_2\bar\xi.
\]
The Frenet equations of $\Phi$ are now given by:
\begin{eqnarray}
\Phi_{zz}&=&2u_z\Phi_z+f_1\xi+f_2\bar\xi+\frac{\gamma_1\gamma_2}{2}\hat\Phi,\label{e:frenet12}\\
\Phi_{z\bar z}&=&-\frac{e^{2u}}{4}\Phi+\frac{C_1C_2e^{2u}}{4}\hat\Phi,\label{e:frenet22}\\
\xi_z&=&-2e^{-2u}f_2F_{\bar z}+A\xi-\frac{iC_1\gamma_2}{2}\hat\Phi,\label{e:frenet32}\\
\bar\xi_z&=&-2e^{-2u}f_1F_{\bar z}-A\bar\xi-\frac{iC_2\gamma_1}{2}\hat\Phi.\label{e:frenet42}
\end{eqnarray}
The rest of the proof is identical to the proof of Proposition 2 in \cite{torurb}.
\end{proof}

We now return to the general case, that of a minimal immersion $F:S\rightarrow \Sigma_1\times\Sigma_2$ of an oriented surface $S$. The $F_z$-components of $\bar R(F_z,F_{\bar z})F_z$ from equations (\ref{e:curv1}) and (\ref{e:curv2}) yield
\begin{equation}\label{e:causscurv}
K=-4e^{-4u}(|f_1|^2+|f_2|^2)+\frac{K_1}{4}(C_1-C_2)^2+\frac{K_2}{4}(C_1+C_2)^2,
\end{equation}
where $K$ is the Gauss curvature of the $(S,g)$. Let $K^\bot$ be the normal curvature of $S$ in $\Sigma_1\times\Sigma_2$ and let $(e_1,e_2)$, $(v_1,v_2)$ be orthonormal frames of the tangent and normal bundles respectively, so that, $(e_1,e_2,v_1,v_2)$ agrees with the orientation of $\Sigma_1\times\Sigma_2$. Then
\[
K^\bot=\bar R^\bot (e_1,e_2,v_2,v_1).
\]
\begin{Prop}
The normal curvature  $K^\bot$ of a minimal immersion in $\Sigma_1\times\Sigma_2$ is given by
\begin{equation}\label{e:normalcurv}
K^\bot=-4e^{-4u}(|f_1|^2-|f_2|^2)+\frac{(K_1+K_2)(C^2_1-C^2_2)}{4}.
\end{equation}
\end{Prop}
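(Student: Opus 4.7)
The plan is to derive (\ref{e:normalcurv}) by a direct analog of the derivation of the Gauss equation (\ref{e:causscurv}) that just preceded it: compute $\bar R(F_z, F_{\bar z})\xi$ in two ways (via the Frenet equations, and via the product splitting $\bar R = R_1 \oplus R_2$) and equate the coefficient of $\xi$ on both sides.

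\emph{Frenet side.} Using (\ref{e:frenet3})–(\ref{e:frenet4}) and their complex conjugates (in particular, $\xi_{\bar z} = -2e^{-2u}\bar f_1 F_z - \bar A\xi$ obtained by conjugating (\ref{e:frenet4})), expand
\[
\bar R(F_z, F_{\bar z})\xi = \nabla_{F_z}\xi_{\bar z} - \nabla_{F_{\bar z}}\xi_z
\]
and collect the $\xi$-coefficient. It splits into two pieces: a shape-operator-type contribution $2e^{-2u}(|f_2|^2 - |f_1|^2)$ coming from re-differentiating the tangential parts of $\xi_z, \xi_{\bar z}$, and a normal-bundle curvature contribution $-(A_{\bar z} + \bar A_z)$ coming from the normal parts $A\xi, -\bar A\xi$. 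A short conversion between $(F_z, F_{\bar z}, \xi, \bar\xi)$ and the real orthonormal frames, using $F_z = \tfrac{1}{2}e^u(e_1 - ie_2)$ and $\xi = (v_1 - iv_2)/\sqrt 2$ with $(v_1, v_2) = (N, \bar N)$, identifies this last piece as $\tfrac{1}{2}e^{2u}K^\perp$ in the paper's convention $K^\perp = \bar R^\perp(e_1, e_2, v_2, v_1)$.

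\emph{Product side.} Using $\bar R = R_1 \oplus R_2$ with $R_k(X, Y)Z = K_k(g_k(Y, Z)X - g_k(X, Z)Y)$, project $F_z, F_{\bar z}, \xi$ into their $T\Sigma_k$ components via the paracomplex structure $P = J_1 J_2$ and compute $G(\bar R(F_z, F_{\bar z})\xi, \bar\xi)$. The inner products $g_k((F_k)_z, \xi_k)$ and $g_k((F_k)_{\bar z}, \xi_k)$ are obtained from the explicit formulas for $J_1\xi$ and $J_2\xi$ listed just after the Frenet equations, and come out proportional to $C_j\bar\gamma_k$. Applying (\ref{e:gamma1}) to substitute $|\gamma_k|^2 = \tfrac{1}{2}e^{2u}(1 - C_k^2)$, the product-side expression collapses to a scalar multiple of $(K_1 + K_2)(C_1^2 - C_2^2)e^{2u}$.

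Equating the $\xi$-coefficients from the two sides and solving for $K^\perp$ produces (\ref{e:normalcurv}). The main technical obstacle is sign bookkeeping across three independent complex-to-real conversions: between $(F_z, F_{\bar z})$ and $(e_1, e_2)$, between $(\xi, \bar\xi)$ and $(v_1, v_2)$, and between $\Omega_k$ and its splitting $\omega_1(\xi_1, \bar\xi_1) + \omega_2(\xi_2, \bar\xi_2)$ (which must be handled consistently with the Kähler convention forced by formulas (\ref{e:complexcu1})–(\ref{e:complexcu2})). Once these are pinned down, the remaining algebra is linear and routine.
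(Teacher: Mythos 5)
Your proposal is correct and is essentially the paper's own argument: computing the $\xi$-component of $\bar R(F_z,F_{\bar z})\xi$ from the Frenet equations is precisely the Ricci equation (normal curvature plus the shape-operator commutator $2e^{-2u}(|f_2|^2-|f_1|^2)$, which the paper writes as $G([A_{v_2},A_{v_1}]e_1,e_2)=-2e^{-4u}(|f_1|^2-|f_2|^2)$), and the paper likewise evaluates the ambient curvature term through the splitting by $P$ and the identity $|\gamma_k|^2=\tfrac{1}{2}e^{2u}(1-C_k^2)$. The only difference is cosmetic — you stay in the complexified frame $(F_z,F_{\bar z},\xi,\bar\xi)$ throughout, while the paper passes to the real frame $(e_1,e_2,v_1,v_2)$ for the ambient curvature computation — and the sign ambiguity you flag is resolved by the paper's convention $v_1=\mathrm{Re}(\xi)$, $v_2=\mathrm{Im}(\xi)$, i.e. $\xi=v_1+iv_2$, which gives $R^\perp(F_z,F_{\bar z})\xi=-\tfrac{1}{2}e^{2u}K^\perp\xi$ and hence the signs in \eqref{e:normalcurv}.
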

\begin{proof}
Let $(e_1,e_2)$, $(v_1,v_2)$ be orthonormal frames of the tangent and normal bundles respectively so that $(e_1,e_2,v_1,v_2)$ agrees with the orientation of $\Sigma_1\times\Sigma_2$. The Ricci equation gives,
\[
\bar R^\bot(e_1,e_2,v_2,v_1)=\bar R(e_1,e_2,v_2,v_1)+G([A_{v_2},A_{v_1}]e_1,e_2),
\]
and setting $e^\pm_i=(e_i\pm Pe_i)/2$ and $v^\pm_i=(v_i\pm Pv_i)/2$, we get
\vspace{3pt}

\begin{eqnarray}
\bar R(e_1,e_2,v_2,v_1)&=&R_1(e^+_1,e^+_2,v^+_2,v^+_1)+R_2(e^-_1,e^-_2,v^-_2,v^-_1)\nonumber\\
&=&K_1\left(g_1(e^+_1,v^+_1)g_1(e^+_2,v^+_2)-g_1(e^+_2,v^+_1)g_1(e^+_1,v^+_2)\right)\nonumber\\
&&\qquad+K_2\left(g_2(e^-_1,v^-_1)g_2(e^-_2,v^-_2)-g_2(e^-_2,v^-_1)g_2(e^-_1,v^-_2)\right)\nonumber \\
&=&K_1G\left(\frac{e_1+Pe_1}{2},\frac{v_1+Pv_1}{2}\right)G\left(\frac{e_2+Pe_2}{2},\frac{v_2+Pv_2}{2}\right)\nonumber \\
&&-K_1G\left(\frac{e_2+Pe_2}{2},\frac{v_1+Pv_1}{2}\right)G\left(\frac{e_1+Pe_2}{2},\frac{v_2+Pv_2}{2}\right)\nonumber\\
&&+K_2G\left(\frac{e_1-Pe_1}{2},\frac{v_1-Pv_1}{2}\right)G\left(\frac{e_2-Pe_2}{2},\frac{v_2-Pv_2}{2}\right)\nonumber \\
&&-K_2G\left(\frac{e_2-Pe_2}{2},\frac{v_1-Pv_1}{2}\right)G\left(\frac{e_1-Pe_2}{2},\frac{v_2-Pv_2}{2}\right),\nonumber
\end{eqnarray}
which gives,
\[
\bar R(e_1,e_2,v_2,v_1)=\frac{K_1+K_2}{4}\left(G(e_1,Pv_1)G(e_2,Pv_2)-G(e_2,Pv_1)G(e_1,Pv_2)\right).
\]
Define $e_1=e^{-u}(F_z+F_{\bar z}),\, e_2=ie^{-u}(F_z-F_{\bar z}),\, v_1={\mbox Re}(\xi),\, v_2={\mbox Im}(\xi)$. Thus
\[
G(e_1,Pv_1)G(e_2,Pv_2)=\frac{e^{-2u}}{4}\left[C_2^2(\gamma_1-\bar\gamma_1)^2-C_1^2(\gamma_2-\bar\gamma_2)^2\right],
\]
and 
\[
G(e_1,Pv_2)G(e_2,Pv_1)=\frac{e^{-2u}}{4}\left[C_2^2(\gamma_1+\bar\gamma_1)^2-C_1^2(\gamma_2+\bar\gamma_2)^2\right].
\]
This yields
\begin{eqnarray}
\bar R(e_1,e_2,v_2,v_1)&=&\frac{(K_1+K_2)(C_1^2-C_2^2)}{8}.\label{e:riem00}
\end{eqnarray}
Computing $G([A_{v_2},A_{v_1}]e_1,e_2)$, we get
\begin{eqnarray}
G([A_{v_2},A_{v_1}]e_1,e_2)&=&-2e^{-4u}(|f_1|^2-|f_2|^2).\label{e:riem000}
\end{eqnarray}
Using equations (\ref{e:riem00}) and (\ref{e:riem000}), we have
\begin{eqnarray}
K^\bot &=&\frac{(K_1+K_2)(C_1^2-C_2^2)}{4}-4e^{-4u}(|f_1|^2-|f_2|^2),\nonumber
\end{eqnarray}
and the proposition follows.
\end{proof}

Using equations (\ref{e:causscurv}) and (\ref{e:normalcurv}), we get
\begin{equation}\label{e:f1length}
|f_j|^2=\frac{e^{4u}}{8}\left[-K+(-1)^jK^\bot+C_jM_j\right].
\end{equation}
\begin{Prop}\label{p:laplacianandnorm}
For $j\in\{1,2\}$, the gradient of $C_j$ is
\[
|\nabla C_j|^2=(1-C_j^2)\left(-K+(-1)^jK^\bot+C_jM_j\right),
\]
and the Laplacian is
\[
\Delta C_j=2C_j(K+(-1)^{j+1}K^\bot)-(1+C_j^2)M_j.
\]
\end{Prop}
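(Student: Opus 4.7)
The plan is to work in local isothermal coordinates, where for a real function $\phi$ on $S$ one has $|\nabla \phi|^2 = 4e^{-2u}|\phi_z|^2$ and $\Delta \phi = 4e^{-2u}\phi_{z\bar z}$. The starting point in both cases is the identity $(C_j)_z = 2ie^{-2u}\bar\gamma_j f_j$ from equation (\ref{e:cz}), together with the known expressions (\ref{e:gamma1}) for $|\gamma_j|^2$ and (\ref{e:f1length}) for $|f_j|^2$.

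For the gradient formula, I would simply compute
\[
|\nabla C_j|^2 = 4e^{-2u}|(C_j)_z|^2 = 16\,e^{-6u}|\gamma_j|^2 |f_j|^2,
\]
and then substitute $|\gamma_j|^2 = \tfrac{1}{2}e^{2u}(1-C_j^2)$ and $|f_j|^2 = \tfrac{1}{8}e^{4u}[-K+(-1)^jK^\bot + C_jM_j]$. The powers of $e^{u}$ cancel and leave exactly the claimed expression $(1-C_j^2)[-K+(-1)^jK^\bot+C_jM_j]$.

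For the Laplacian the idea is to differentiate $(C_j)_z = 2ie^{-2u}\bar\gamma_j f_j$ with respect to $\bar z$:
\[
(C_j)_{z\bar z} = 2ie^{-2u}\bigl[-2u_{\bar z}\bar\gamma_j f_j + (\bar\gamma_j)_{\bar z}f_j + \bar\gamma_j (f_j)_{\bar z}\bigr].
\]
Here $(f_j)_{\bar z}$ is supplied by (\ref{e:f1z}), and $(\bar\gamma_j)_{\bar z}$ is the complex conjugate of $(\gamma_j)_z$, which I would extract from the definition (\ref{e:definofa}) of $A$ as
\[
(\gamma_j)_z = \bigl(2u_z - (-1)^{j+1}A\bigr)\gamma_j - 2iC_j f_j.
\]
The main obstacle, and the heart of the proof, is then the cancellation step: the two terms proportional to $u_{\bar z}\bar\gamma_j f_j$ annihilate each other, and the two terms proportional to $\bar A \bar\gamma_j f_j$ (one coming from the conjugate of $(\gamma_j)_z$, the other from $(f_j)_{\bar z}$) likewise cancel. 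What survives is only
\[
(C_j)_{z\bar z} = 2ie^{-2u}\Bigl[2iC_j|f_j|^2 + \frac{ie^{2u}|\gamma_j|^2 M_j}{4}\Bigr] = -4e^{-2u}C_j|f_j|^2 - \frac{|\gamma_j|^2 M_j}{2}.
\]

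Finally, multiplying by $4e^{-2u}$ and again substituting the closed forms for $|f_j|^2$ and $|\gamma_j|^2$, the $C_j|f_j|^2$ contribution produces $2C_j(K+(-1)^{j+1}K^\bot) - 2C_j^2 M_j$, while the $|\gamma_j|^2 M_j$ contribution produces $-(1-C_j^2)M_j$. Adding these gives
\[
\Delta C_j = 2C_j\bigl(K+(-1)^{j+1}K^\bot\bigr) - (1+C_j^2)M_j,
\]
as required. The entire computation is local and algebraic once the key cancellation is observed; no global hypotheses are used, and the expressions are well-defined even at points where $\gamma_j$ vanishes, since both sides of the final identities extend continuously there.
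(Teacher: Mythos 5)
Your proposal is correct and follows essentially the same route as the paper: both compute $|\nabla C_j|^2=16e^{-6u}|\gamma_j|^2|f_j|^2$ from (\ref{e:cz}) and substitute (\ref{e:gamma1}) and (\ref{e:f1length}), and both obtain the Laplacian by differentiating (\ref{e:cz}) in $\bar z$, using (\ref{e:f1z}) together with the expression for $(\bar\gamma_j)_{\bar z}$ extracted from (\ref{e:definofa}), after which the $u_{\bar z}$ and $\bar A$ terms cancel exactly as you describe. Your closing remark about extending the identities continuously across the zeros of $\gamma_j$ is a sensible addition, since the paper's formula (\ref{e:definofa}) for $A$ is only stated where $\gamma_1\gamma_2\neq 0$.
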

\begin{proof}
Using equation (\ref{e:cz}) we obtain
\begin{eqnarray}
|\nabla C_1|^2&=&8e^{-4u}(1-C_1^2)|f_1|^2\nonumber, 
\end{eqnarray}
and from equation (\ref{e:f1length}) we get
\[
|\nabla C_1|^2=(1-C_1^2)\left(\frac{C_1}{2}\left[K_1(C_1-C_2)+K_2(C_1+C_2)\right]-K-K^\bot\right).
\]
Similarly, we get
\[
|\nabla C_2|^2=(1-C_2^2)\left(\frac{C_2}{2}\left[-K_1(C_1-C_2)+K_2(C_1+C_2)\right]-K+K^\bot\right).
\]
Considering the Laplacian $\Delta$ we have
\begin{eqnarray}
\Delta C_1&=& 8ie^{-2u}(-2u_{\bar z}e^{-2u}\bar\gamma_1f_1+e^{-2u}(\bar\gamma_1)_{\bar z}f_1+e^{-2u}\bar\gamma_1(f_1)_{\bar z}).\nonumber
\end{eqnarray}
The relation (\ref{e:definofa}) becomes,
\[
(\bar\gamma_1)_{\bar z}=2u_{\bar z}\bar\gamma_1+2iC_1\bar f_1-\bar A\bar\gamma_1,
\]
and using equation (\ref{e:f1z}) we get
\[
\Delta C_1=-16e^{-4u}C_1|f_1|^2-\frac{1-C_1^2}{2}\left[(C_1-C_2)K_1+(C_1+C_2)K_2\right],
\]
and combining this with equation (\ref{e:f1length}), we finally get
\[
\Delta C_1=2C_j(K+K^\bot)-\frac{1+C_1^2}{2}\left[K_1(C_1-C_2)+K_2(C_1+C_2)\right].
\]
Similar arguments shows 
\[
\Delta C_2=2C_j(K-K^\bot)-\frac{1+C_2^2}{2}\left[-K_1(C_1-C_2)+K_2(C_1+C_2)\right].
\]
\end{proof}

The following recovers the main theorem of \cite{Ge1}:

\begin{Prop}\label{t:totallygeodesictheorem}
Consider a pair of 2-dimensional oriented manifolds $(\Sigma_1,g_1)$ and $(\Sigma_2,g_2)$ such that their Gauss curvatures $K_1$ and $K_2$ satisfy $K_1\neq K_2$. Then every Lagrangian minimal surface in $\Sigma_1\times\Sigma_2$ is locally the product $\gamma_1\times\gamma_2$, where $\gamma_k$ is a geodesic in $(\Sigma_k,g_k)$.
\end{Prop}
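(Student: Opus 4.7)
The plan is to combine the Laplacian formula from Proposition~\ref{p:laplacianandnorm} with the classification in Proposition~\ref{p:bothlagrangian}. Since the two symplectic structures enter the theory symmetrically through the index $j$, I would assume without loss of generality that $F$ is Lagrangian with respect to $\Omega_1$, i.e.\ that the K\"ahler function $C_1$ vanishes identically on $S$; the case of $\Omega_2$ follows by the mirror formula for $j=2$. The goal is then to show that $C_2$ is also forced to vanish, after which Proposition~\ref{p:bothlagrangian} immediately produces the desired local product of geodesics.

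With $C_1\equiv 0$, the Laplacian formula of Proposition~\ref{p:laplacianandnorm} collapses to
\[
0=\Delta C_1 = 2C_1(K+K^\bot)-(1+C_1^2)M_1 = -M_1,
\]
so $M_1\equiv 0$ on $S$. Substituting $C_1=0$ into the definition (\ref{e:simplicity}) of $M_1$ gives
\[
M_1=\frac{C_1-C_2}{2}K_1+\frac{C_1+C_2}{2}K_2 = \frac{C_2(K_2-K_1)}{2},
\]
and therefore $C_2(K_2-K_1)\equiv 0$ pointwise on $S$. The hypothesis $K_1\neq K_2$ (understood pointwise along the immersion, in line with Theorem~\ref{t:1}) then forces $C_2\equiv 0$.

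At this point $F$ is Lagrangian with respect to both symplectic structures $\Omega_1$ and $\Omega_2$, so Proposition~\ref{p:bothlagrangian} applies directly and yields that $F(S)$ is locally a product $\gamma_1\times\gamma_2$, with each $\gamma_k$ a geodesic in $(\Sigma_k,g_k)$ because $F$ is minimal. There is no serious obstacle in this argument: all of the analytic content has been packaged into the earlier formulas for $\Delta C_j$, and the proof reduces to the algebraic observation that, once $C_1$ vanishes, the Laplacian identity forces $C_2$ to vanish too whenever the Gauss curvatures of the two factors differ pointwise. The only care needed is in fixing the convention for which symplectic structure the Lagrangian condition refers to, and in verifying that the sign structure of $M_1$ really does collapse to a single multiple of $C_2(K_2-K_1)$ when $C_1=0$.
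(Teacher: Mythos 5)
Your proof is correct and follows essentially the same route as the paper's: set $C_1\equiv 0$, use $\Delta C_1=-M_1=0$ to deduce $C_2(K_1-K_2)=0$, conclude $C_2\equiv 0$ from $K_1\neq K_2$, and invoke Proposition~\ref{p:bothlagrangian}. You simply spell out the algebra for $M_1$ and the symmetry in $j$ that the paper leaves implicit.
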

\begin{proof}
If $C_1=0$ we have $\Delta C_1=0$ and therefore 
\[
C_2(K_1-K_2)=0.
\] 
This implies that $C_2=C_1=0$ which means that $F$ is Lagrangian minimal with respect to both symplectic structures. The theorem follows.
\end{proof}
\vspace{0.1in}
\noindent{\bf Theorem 1.}
{\it Let $(\Sigma_1,g_1)$ and $(\Sigma_2,g_2)$ be two Riemannian 2-manifolds with Gauss curvatures $K_1$ and $K_2$ respectively. If $F: S\rightarrow \Sigma_1\times\Sigma_2$ is a totally geodesic immersion of a surface, so that $K_1(F_1(p))\neq K_2(F_2(p))$ for any $p\in S$, then $F(S)$ is locally either a slice or the product of geodesics in $(\Sigma_k,g_k)$.
}
\begin{proof}
Since $F$ is totally geodesic, the second fundamental form $h$ must vanishes. Thus,
\[
f_1=G(F_{zz},\bar\xi)=0 \qquad f_2=G(F_{zz},\xi)=0
\]
This means that $|\nabla C_j|=0$ and therefore, $C_1$ and $C_2$ are both constant. This means that $\Delta C_j=0$ and thus
\[
2C_j(K+(-1)^{j+1}K^\bot)-\frac{1+C_j^2}{2}\left[(-1)^{j+1}K_1(C_1-C_2)+K_2(C_1+C_2)\right]=0.
\]
We then get
\begin{equation}\label{e:equation1.1}
(C_j^2-1)[K_1(C_1-C_2)+(-1)^{j+1}K_2(C_1+C_2)]=0.
\end{equation}
We now consider the following cases:

$\bullet$ $C_j^2< 1$, for any $j\in\{1,2\}$. In this case, using equation (\ref{e:equation1.1}) together with the assumption that $K_1^2+K_2^2= 0$ in at most isolated points, we conclude that $C_1=C_2=0$. This means that $F$ is locally the product of geodesics.

\vspace{0.1in}

$\bullet$ $C_1^2=1$ and $C_2^2<1$. It is not hard to see that
\begin{equation}\label{e:import1}
K_1(C_1-C_2)-K_2(C_1+C_2)=0.
\end{equation}
The fact that $C_2$ is constant implies $|\nabla C_2|^2=0$. Thus,
\[
\frac{C_2}{2}[-K_1(C_1-C_2)+K_2(C_1+C_2)]-K+K^\bot=0.
\] 
and therefore, $K=K^\bot$.

The expressions of $K$ and $K^\bot$ in equations (\ref{e:gausseqn}) and (\ref{e:normalcurv}) now become
\begin{equation}\label{e:gausstotal}
K=\frac{K_1}{4}(C_1-C_2)^2+\frac{K_2}{4}(C_1+C_2)^2,
\end{equation}
and 
\begin{equation}\label{e:normaltotal}
K^\bot=\frac{(K_1+K_2)(C_1^2-C_2^2)}{4}.
\end{equation}
The relations (\ref{e:import1}) imply,
\[
C_2(K_1+K_2)=C_1(K_1-K_2),
\]
and under the assumptions $C_1^2=1$ and $K_1\neq K_2$, we have that $C_2\neq 0$ and $K_1+K_2\neq 0$. Namely,
\begin{equation}\label{e:import2}
C_2=\frac{C_1(K_1-K_2)}{(K_1+K_2)}.
\end{equation}
Using equation (\ref{e:import2}) the Gauss curvature $K$ and normal curvature $K^\bot$ given in equations (\ref{e:gausstotal}) and (\ref{e:normaltotal}) become
\[
K=\frac{K^2_1+K^2_2-K_1K_2}{K_1+K_2}\qquad\qquad K^\bot=\frac{K_1K_2}{K_1+K_2},
\]
and using $K=K^\bot$ we then have
\[
K^2_1+K^2_2-K_1K_2=K_1K_2,
\]
yielding $K_1=K_2$ giving a contradiction. Thus, $C_1^2=C_2^2=1$ and therefore $F(S)$ is a complex curve with respect to both complex structures $J_1$ and $J_2$ and thus it is either the slice $\{p\}\times\Sigma_2$ or $\Sigma_1\times\{q\}$.
\end{proof}


We also give the following Proposition:
\begin{Prop}\label{p:laplacian}
Away from complex points, i.e. $C_j^2<1$, we have
\[
\Delta\log(1\pm C_j)=\mp M_j+K+(-1)^{j+1}K^\bot,
\]
where $M_j$ is defined in equation (\ref{e:simplicity}).
\end{Prop}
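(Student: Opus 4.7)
The plan is to reduce the claim to a direct algebraic consequence of Proposition \ref{p:laplacianandnorm} via the standard identity
\[
\Delta \log f = \frac{\Delta f}{f} - \frac{|\nabla f|^2}{f^2},
\]
applied with $f = 1 \pm C_j$. Since we are working away from complex points, $C_j^2 < 1$, so $1 \pm C_j$ is nowhere zero and taking the logarithm is justified. Note also that $\nabla(1 \pm C_j) = \pm \nabla C_j$, so $|\nabla(1 \pm C_j)|^2 = |\nabla C_j|^2$, and $\Delta(1 \pm C_j) = \pm \Delta C_j$.

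The first step is to substitute the two formulas from Proposition \ref{p:laplacianandnorm}, namely
\[
|\nabla C_j|^2 = (1 - C_j^2)\bigl(-K + (-1)^j K^\bot + C_j M_j\bigr)
\]
and
\[
\Delta C_j = 2 C_j\bigl(K + (-1)^{j+1} K^\bot\bigr) - (1 + C_j^2) M_j,
\]
into the expression
\[
\Delta \log(1 \pm C_j) = \frac{\pm \Delta C_j}{1 \pm C_j} - \frac{|\nabla C_j|^2}{(1 \pm C_j)^2}.
\]
The key simplification is the factorization $1 - C_j^2 = (1 - C_j)(1 + C_j)$, which cancels one factor of $1 \pm C_j$ from the gradient term; this places both contributions over the common denominator $1 \pm C_j$.

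To organize the bookkeeping cleanly, I would introduce the shorthand $L := K + (-1)^{j+1} K^\bot$, so that $-K + (-1)^j K^\bot = -L$. Then the numerator of the combined fraction becomes
\[
\pm\bigl(2 C_j L - (1 + C_j^2) M_j\bigr) - (1 \mp C_j)(-L + C_j M_j).
\]
Expanding and collecting the coefficients of $L$ and $M_j$ separately yields exactly $(1 \pm C_j)(L \mp M_j)$, and dividing by $1 \pm C_j$ produces
\[
\Delta \log(1 \pm C_j) = L \mp M_j = \mp M_j + K + (-1)^{j+1} K^\bot,
\]
as claimed.

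There is no real obstacle here; the only thing that needs care is keeping track of the interplay between the two sign choices $\pm$ in $1 \pm C_j$, the sign of $\Delta(1 \pm C_j)$, and the sign $(-1)^j$ that appears in the gradient formula. A minor sanity check is that adding the two resulting identities recovers $\Delta \log(1 - C_j^2) = 2K + 2(-1)^{j+1} K^\bot$, while subtracting them recovers $\Delta \log\!\tfrac{1+C_j}{1-C_j} = -2 M_j$, which is consistent with the formulas in Proposition \ref{p:laplacianandnorm} and offers a cross-check before writing the result down.
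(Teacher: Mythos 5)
Your proof is correct and follows essentially the same route as the paper: both apply the identity $\Delta\log f=\Delta f/f-|\nabla f|^2/f^2$ to $f=1\pm C_j$, substitute the gradient and Laplacian formulas from Proposition \ref{p:laplacianandnorm}, and cancel a factor of $1\pm C_j$ via $1-C_j^2=(1-C_j)(1+C_j)$. The shorthand $L$ and the final cross-check are cosmetic differences only; the algebra checks out.
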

\begin{proof}
If $f$ is a smooth positive function on $S$, then
\[
\Delta\log f=\frac{f\Delta f-|\nabla f|^2}{f^2}.
\]
Then using Proposition \ref{p:laplacianandnorm} we have
\begin{eqnarray}
\Delta\log(1\pm C_j)
&=&\pm\frac{2C_j(K+(-1)^{j+1}K^\bot)-(1+C_j^2)M_j}{1\pm C_j}\nonumber\\
&&\qquad -\frac{(1-C_j^2)(C_jM_j-K+(-1)^jK^\bot)}{(1\pm C_j)^2}\nonumber\\
&=&\pm\frac{2C_j(K+(-1)^{j+1}K^\bot)-(1+C_j^2)M_j}{1\pm C_j}\nonumber\\
&&\qquad +\frac{(-1\pm C_j)(C_jM_j-K+(-1)^jK^\bot)}{1\pm C_j}\nonumber \\
&=&K+(-1)^{j+1}K^\bot+\frac{\mp1-C_j}{1\pm C_j}M_j\nonumber \\
&=&K+(-1)^{j+1}K^\bot\mp M_j,\nonumber
\end{eqnarray}
and the proposition follows.
\end{proof}

We now obtain the following:

\begin{Prop}
Let $F:S\rightarrow\Sigma_1\times\Sigma_2$ be a minimal immersion of an orientable surface without complex points. Then there exist smooth functions $v,w:S\rightarrow {\mathbb R}$ such that
\[
v_{z\bar z}+\frac{K_2|G(J_1F_z,J_2F_z)|}{4}\sinh 2v=0\quad w_{z\bar z}+\frac{K_1|G(J_1F_z,J_2F_z)|}{4}\sinh 2w=0,
\]
where $K_1,K_2$ denote the Gauss curvatures of $\Sigma_1$ and $\Sigma_2$, respectively.
\end{Prop}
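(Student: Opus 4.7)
The plan is to build $v$ and $w$ as symmetric and antisymmetric combinations of $\operatorname{atanh}(C_1)$ and $\operatorname{atanh}(C_2)$, then reduce everything to Proposition \ref{p:laplacian}. Since $F$ has no complex points, $C_j^2<1$, so $v_j:=\tfrac{1}{2}\log\frac{1+C_j}{1-C_j}$ are smooth; equivalently $\tanh v_j=C_j$. Subtracting the two identities of Proposition \ref{p:laplacian} yields $\Delta v_j=-M_j$. I would then set
\[
v=\tfrac{1}{2}(v_1+v_2),\qquad w=\tfrac{1}{2}(v_2-v_1).
\]

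The next step is algebraic bookkeeping. Using the definition (\ref{e:simplicity}) of $M_j$,
\[
M_1+M_2=(C_1+C_2)K_2,\qquad M_1-M_2=(C_1-C_2)K_1,
\]
so $\Delta v=-\tfrac{1}{2}(C_1+C_2)K_2$ and $\Delta w=\tfrac{1}{2}(C_1-C_2)K_1$. Because $\tanh v_j=C_j$ gives $\sinh v_j=C_j/\sqrt{1-C_j^2}$ and $\cosh v_j=1/\sqrt{1-C_j^2}$, the hyperbolic addition formula produces
\[
\sinh(2v)=\sinh(v_1+v_2)=\frac{C_1+C_2}{\sqrt{(1-C_1^2)(1-C_2^2)}},\qquad \sinh(2w)=\frac{C_2-C_1}{\sqrt{(1-C_1^2)(1-C_2^2)}}.
\]
Substitution gives $\Delta v=-\tfrac{1}{2}K_2\sqrt{(1-C_1^2)(1-C_2^2)}\,\sinh(2v)$ and likewise $\Delta w=-\tfrac{1}{2}K_1\sqrt{(1-C_1^2)(1-C_2^2)}\,\sinh(2w)$.

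Finally I would translate to the $z$-coordinate via $\Delta=4e^{-2u}\partial_z\partial_{\bar z}$ and identify the coefficient with $|G(J_1F_z,J_2F_z)|/4$. Combining (\ref{e:complexcu1}) and (\ref{e:complexcu2}) with $G(F_z,F_z)=0$ and $G(F_z,\xi)=G(F_z,\bar\xi)=0$ shows $G(J_1F_z,J_2F_z)=\gamma_1\gamma_2$; together with $|\gamma_k|^2=e^{2u}(1-C_k^2)/2$ from (\ref{e:gamma1}), this gives $|G(J_1F_z,J_2F_z)|=\tfrac{e^{2u}}{2}\sqrt{(1-C_1^2)(1-C_2^2)}$. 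Writing $v_{z\bar z}=\tfrac{e^{2u}}{4}\Delta v$ and matching constants yields the two claimed equations.

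There is no genuine obstacle in the derivation; the only conceptual step is spotting the right change of variables, which is motivated by the form of the Kähler functions $C_1=\tanh(X_1-X_2)$, $C_2=\tanh(X_1+X_2)$ appearing in Theorem \ref{t:2} — essentially we are inverting that relation, so $v$ and $w$ play the role of $X_1$ and $X_2$. Everything else is Proposition \ref{p:laplacian} plus the addition formula for $\sinh$.
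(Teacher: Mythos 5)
Your proposal is correct and is essentially the paper's own proof: the paper defines $4v=\log\frac{(1+C_1)(1+C_2)}{(1-C_1)(1-C_2)}$ and $4w=\log\frac{(1-C_1)(1+C_2)}{(1+C_1)(1-C_2)}$, which are exactly your $\tfrac12(\operatorname{atanh}C_1+\operatorname{atanh}C_2)$ and $\tfrac12(\operatorname{atanh}C_2-\operatorname{atanh}C_1)$, and then applies Proposition \ref{p:laplacian}, the $\sinh$ addition formula via $C_1=\tanh(v-w)$, $C_2=\tanh(v+w)$, and $\Delta=4e^{-2u}\partial_z\partial_{\bar z}$ just as you do. Your verification that $G(J_1F_z,J_2F_z)=\gamma_1\gamma_2$, hence $|G(J_1F_z,J_2F_z)|=\tfrac{e^{2u}}{2}\sqrt{(1-C_1^2)(1-C_2^2)}$, is a detail the paper asserts without proof, so including it is a small improvement.
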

\begin{proof}
Note that 
\[
|G(J_1F_z,J_2F_z)|=\frac{e^{2u}}{2}\sqrt{(1-C_1^2)(1-C_2^2)}.
\]
Define the functions $v,w:S\rightarrow {\mathbb R}$ by
\[
4v=\log\left(\frac{(1+C_1)(1+C_2)}{(1-C_1)(1-C_2)}\right)\qquad 
4w=\log\left(\frac{(1-C_1)(1+C_2)}{(1+C_1)(1-C_2)}\right).
\]
We then have $C_1=\tanh(v-w)$ and $C_2=\tanh(v+w)$. Using Proposition \ref{p:laplacian}, we get
\[
\Delta v=-\frac{K_2}{2}(C_1+C_2)\qquad\qquad \Delta w=-\frac{K_1}{2}(-C_1+C_2).
\]
We also have
\[
e^{2u}=2|G(J_1F_z,J_2F_z)|\cosh(v+w)\cosh(v-w).
\]
Thus
\[
4e^{-2u}v_{z\bar z}=\Delta v =-\frac{K_2}{2}(\tanh(v-w)+\tanh(v+w)),
\]
and therefore
\[
v_{z\bar z}=-\frac{K_2 e^{2u}\sinh2v}{8\cosh(v+w)\cosh(v-w)}=-\frac{K_2|G(J_1F_z,J_2F_z)| \sinh2v}{4},
\]
implying that
\[
v_{z\bar z}+\frac{K_2|G(J_1F_z,J_2F_z)|}{4}\sinh 2v=0.
\]
Similarly, one can show that
\[
w_{z\bar z}+\frac{K_1|G(J_1F_z,J_2F_z)|}{4}\sinh 2w=0.
\]
\end{proof}
We now have:

\vspace{0.1in}
\noindent{\bf Theorem 2.}
{\it If $X_1,X_2:{\mathbb C}\rightarrow {\mathbb R}$ are two smooth solutions of the sinh-Gordon equation:
\[
X_{z\bar z}+\frac{1}{2}\sinh (2X)=0,
\]
there exists a 1-parameter family of minimal immersions $F_t:{\mathbb C}\rightarrow {\mathbb S}^2_p\times {\mathbb S}^2_p$, where ${\mathbb S}^2_p$ is a 2-dimensional real space form, so that the induced metrics $F_t^{\ast}G$ are all equal to $4\cosh(X_1+X_2)\cosh(X_1-X_2)|dz^2|$ with K\"ahler functions $C_1=\tanh(X_1-X_2)$ and $C_2=\tanh(X_1+X_2)$.
}
\begin{proof}
Define local real functions $u,C_1,C_2:{\mathbb C}\rightarrow {\mathbb R}$ by
\[
C_1=\tanh(X_1-X_2)\qquad C_2=\tanh(X_1+X_2),
\]
and
\[
e^{2u}=4\cosh(X_1+X_2)\cosh(X_1-X_2).
\]
We also define the complex functions $\gamma_j,f_j,A:{\mathbb C}\rightarrow {\mathbb C}$ by
\[
\gamma_1=\left(2e^{it}\frac{\cosh(X_1+X_2)}{\cosh(X_1-X_2)}\right)^{1/2}\qquad \gamma_2=2e^{it}\gamma_1^{-1},
\]
and
\[
A=\frac{1}{2}\frac{\partial}{\partial z}\left[\log\left(\frac{\cosh(X_1+X_2)}{\cosh(X_1-X_2)}\right)\right].
\]
Then, $|\gamma_j|^2=\frac{e^{2u}(1-C_j^2)}{2}$ and $(\gamma_j)_{\bar z}=(-1)^{j+1}\bar A\gamma_j$. 
The complex functions $f_j:{\mathbb C}\rightarrow {\mathbb C}$ defined by
\[
f_j=-i\gamma_j(v+(-1)^jw)_z,
\]
satisfy, $(C_j)_z=2ie^{-2u}f_j\bar\gamma_j$. On the other hand,
\begin{eqnarray}
(f_j)_{\bar z}&=&-i(\gamma_j)_{\bar z}(v+(-1)^jw)_z-i\gamma_j(v_{z\bar z}+(-1)^jw_{z\bar z})\nonumber\\
&=&(-1)^{j+1}\bar A f_j+\frac{i\gamma_j}{2}\left(\sinh2v+(-1)^{j}\sinh2w\right).\nonumber
\end{eqnarray}
Using the relations
\[
\sinh 2v=\cosh(v-w)\cosh(v+w)(C_1+C_2),
\]
and 
\[
\sinh 2w=\cosh(v-w)\cosh(v+w)(-C_1+C_2),
\]
we have,
\[
(f_j)_{\bar z}=(-1)^{j+1}\bar A f_j+\frac{ie^{2u}\gamma_j}{8}\left((-1)^{j+1}(C_1-C_2)+(C_1+C_2)\right).
\]
Thus the functions $(u,A,C_j,\gamma_j,f_j)$ satisfy the structure equations for the minimal immersion $F_t$ for each $t\in {\mathbb R}$. It is not hard to compute that the induced metric of those immersions is $4\cosh(v+w)\cosh(v-w)|dz^2|$ and this completes the proof of the theorem.
\end{proof}

\vspace{0.1in}


\section{Compact Minimal Surfaces}\label{s:4}

We now study compact oriented 2-manifolds without boundary that are minimally immersed on $\Sigma_1\times\Sigma_2$. Such surfaces will be simply called compact minimal surfaces.

A classical theorem says that ${\mathbb R}^3$ doesn't contain compact minimal surfaces. Furthermore no simply-connected compact manifold can be minimally immersed in a complete Cartan-Hadamard manifold \cite{RS}. 

On the other hand, it is well known that every oriented, compact two dimensional manifold of genus$\geq 2$ admits a metric $g_j$ of negative curvature. In this case, the corresponding slice is a totally geodesic compact surface in $\Sigma_1\times\Sigma_2$ of genus$\geq 2$. Also, the Lyusternik–Fet theorem \cite{LF} says that $(\Sigma_j,g_j)$ admits a closed geodesic $\gamma_j$. 
If  $\Sigma_1$ and $\Sigma_2$ are both compact of genus $\geq 2$, they then admit metrics $g_1$ and $g_2$ of negative curvature and $\Sigma_1\times\Sigma_2$ contains a minimal 2-torus $\gamma_1\times\gamma_2$. 

The following theorem shows the non-existence of minimal 2-spheres when $g_j$ are of negative curvatures.

\vspace{0.1in}
\noindent{\bf Theorem 3.}
{\it There are no minimal 2-spheres in the product of oriented Riemannian surfaces of negative curvatures.
}
\begin{proof}
Suppose first $C_j^2<1$, for $j\in\{1,2\}$. Thus Proposition \ref{p:laplacianandnorm} gives
\[
|\nabla C_j|^2=(1-C_j^2)\left(C_j M_j-K+(-1)^jK^\bot\right)\geq 0,
\]
and therefore, 
\[
K+(-1)^{j+1}K^\bot\leq C_j M_j.
\]
Then
\[
\sum_{j=1}^2K+(-1)^{j+1}K^\bot\leq \sum_{j=1}^2 C_j M_j,
\]
and using the fact that $K_j\leq 0$ we finally get
\[
2K\leq \frac{(C_1-C_2)^2}{2}K_1+\frac{(C_1+C_2)^2}{2}K_2\leq 0.
\]
Assume without loss of generality that $C_1^2=1$ and $C_2^2<1$. Then
\[
|\nabla C_2|^2=(1-C_2^2)\left(C_2M_2-K+K^\bot\right)\geq 0,
\]
implying 
\begin{equation}\label{e:ineq}
K-K^\bot\leq \frac{C_2^2-C_1C_2}{2}K_1+\frac{C_2^2+C_1C_2}{2}K_2.
\end{equation}
On the other hand, Proposition \ref{p:laplacianandnorm} gives
\[
0=\Delta C_1=2C_1(K+K^\bot)-2M_1,
\]
which yields,
\[
K^\bot=-K+ \frac{C_1^2-C_1C_2}{2}K_1+\frac{C_1^2+C_1C_2}{2}K_2.
\]
The above relation combined with inequality  (\ref{e:ineq}) implies
\[
K\leq \frac{(C_1-C_2)^2}{4}K_1+\frac{(C_1+C_2)^2}{4}K_2\leq 0.
\]
Finally, consider the case $C_1^2=C_2^2=1$. Using Proposition \ref{p:laplacianandnorm} we have
\[
\Delta C_1=\Delta C_2=0,
\]
and a straightforward calculation gives
\[
K=\frac{(C_1-C_2)^2}{4}K_1+\frac{(C_1+C_2)^2}{4}K_2\leq 0.
\]
In any case we have $K\leq 0$. Therefore, the Euler number $\chi$ of $S$ is 
\[
\chi=\frac{1}{2\pi}\int_{S}KdA\leq 0,
\]
and the Theorem follows.
\end{proof}

Regarding minimal 2-tori in $\Sigma_1\times\Sigma_2$, where $\Sigma_1$ and $\Sigma_2$ are of negative curvatures, we have the following:

\vspace{0.1in}
\noindent{\bf Theorem 4.}
{\it If a 2-torus is minimally immersed in the product of surfaces of negative curvatures, then it is Lagrangian with respect to both symplectic structures $\Omega_1$ and $\Omega_2$.
}
\begin{proof}
Following similar arguments to those of the proof of Theorem \ref{t:3}, one can prove
\[
K\leq \frac{\lambda}{2}\left(C^2_1+C^2_2\right)\leq 0.
\]
On the other hand,
\[
0=\chi\leq \frac{\lambda}{4\pi}\int_{S}\left(C^2_1+C^2_2\right)\,dA\leq 0,
\]
and using the fact that $\lambda<0$ we have that $C_1=C_2=0$. The Theorem then follows from Proposition \ref{p:bothlagrangian}.
\end{proof}

The following theorem provides a lower bound for the area of minimal 2-spheres in $\Sigma_1\times\Sigma_2$,

\vspace{0.1in}
\noindent{\bf Theorem 5.}
{\it For  $j\in\{1,2\}$, assume that $(\Sigma_j,g_j)$ are not both flat surfaces and both have bounded Gauss curvatures. If $F:S\rightarrow \Sigma_1\times\Sigma_2$ is a minimal embedding of a 2-sphere $S$ in $\Sigma_1\times\Sigma_2$,  then the area $|S|$ of $S$ satisfies
\[
|S|\geq\frac{4\pi}{\max\{\displaystyle\sup_{x\in\Sigma_1}|K_1(x)|,\sup_{y\in\Sigma_2}|K_2(y)|\}},
\]
where $K_j$ is the Gauss curvature of $g_j$.
}
\begin{proof}
Using $|f_j|^2=\frac{e^{4u}}{8}\left(-K+(-1)^jK^\bot+C_jM_j\right)\geq 0$, we get
\[
K+(-1)^{j+1}K^\bot\leq C_jM_j,
\]
which implies 
\[
2K\leq C_1M_1+C_2M_2,
\]
and therefore, for any $p\in S$, we have 
\begin{eqnarray}
K(p)&\leq & \frac{(C_1(p)-C_2(p))^2}{4}K_1(p)+\frac{(C_1(p)+C_2(p))^2}{4}K_2(p)\nonumber\\
&\leq & \frac{(C_1(p)-C_2(p))^2}{4}|K_1(p)|+\frac{(C_1(p)+C_2(p))^2}{4}|K_2(p)|\nonumber\\
&\leq & \frac{(C_1(p)-C_2(p))^2}{4}\sup_{x\in\Sigma_1}|K_1(x)|+\frac{(C_1(p)+C_2(p))^2}{4}\sup_{y\in\Sigma_2}|K_2(y)|\nonumber\\
&\leq & \frac{1-C_1(p)C_2(p)}{2}\sup_{x\in\Sigma_1}|K_1(x)|+\frac{1+C_1(p)C_2(p)}{2}\sup_{y\in\Sigma_2}|K_2(y)|\nonumber \\
&\leq & \max\{\sup_{x\in\Sigma_1}|K_1(x)|,\sup_{y\in\Sigma_2}|K_2(y)|\}.\nonumber
\end{eqnarray}
Thus
\begin{eqnarray}
2&=&\frac{1}{2\pi}\int_S\,K\,dA\nonumber\\
&\leq &\frac{1}{2\pi}\int_S \max\{\sup_{x\in\Sigma_1}|K_1(x)|,\sup_{y\in\Sigma_2}|K_2(y)|\}\,dA\nonumber\\
&= &\frac{\max\{\displaystyle\sup_{x\in\Sigma_1}|K_1(x)|,\displaystyle\sup_{y\in\Sigma_2}|K_2(y)|\}\, |S|}{2\pi},\nonumber
\end{eqnarray}
and the theorem follows.
\end{proof}

The inequality (\ref{e:areainequality}) is sharp. In fact, it was proven in \cite{torurb} that for compact minimal surfaces in the product of the round 2-spheres ${\mathbb S}^2\times {\mathbb S}^2$,  equality holds when the surface is a slice which is a 2-sphere and the area is $4\pi$.

\vspace{0.1in}

In the case of ${\mathbb S}^2\times {\mathbb S}^2$, a rigidity result for the functions $K\pm K^\bot$ was given in \cite{torurb}. A similar result is now obtained for the product $\Sigma_1\times\Sigma_2$.

\vspace{0.1in}
\noindent{\bf Theorem 6.}
{\it Let $F$ be an immersion of an oriented compact surface in $\Sigma_1\times\Sigma_2$, where $M_j\neq 0$. Then, the following statements hold.
\begin{enumerate}
\item If $K+(-1)^{j+1}K^\bot\geq 0$, then either $F$ is a complex curve with respect to the complex structure $J_j$ or $F$ is Lagrangian with respect to the symplectic structure $\Omega_j$.
\item If $K+(-1)^{j+1}K^\bot<0$ and, furthermore, assume that both $\Sigma_1$ and $\Sigma_2$ have negative curvatures, then $F$ is a complex curve with respect to the complex structure $J_j$.
\end{enumerate}
}
\begin{proof}
A brief computation shows 
\[
C_jM_j=K+(-1)^{j+1}K^\bot+8e^{-4u}|f_j|^2.
\]
On the other hand,
\[
|\nabla C_j|^2=8e^{-4u}|f_j|^2(1-C_j^2),
\]
and 
\[
C_j\Delta C_j=-(1-C_j^2)\left(K+(-1)^{j+1}K^\bot\right)-8e^{-4u}(1+C_j^2)|f_j|^2.
\]
This means 
\begin{equation}\label{e:lap}
C_j\Delta C_j+|\nabla C_j|^2=-(1-C_j^2)\left(K+(-1)^{j+1}K^\bot\right)-16e^{-4u}C_j^2|f_j|^2.
\end{equation}

\vspace{0.1in}

\noindent \textbf{(1)} Assuming $K+(-1)^{j+1}K^\bot\geq 0$, we have that 
\[
C_j\Delta C_j+|\nabla C_j|^2\leq 0.
\]
The divergence theorem gives
\[
\int_{S}\left(C_j\Delta C_j+|\nabla C_j|^2\right) dA=0,
\]
and therefore
\[
C_j\Delta C_j+|\nabla C_j|^2= 0,
\]
which means that
\[
(1-C_j^2)\left(K+(-1)^{j+1}K^\bot\right)+16e^{-4u}C_j^2|f_j|^2=0.
\]
But since $(1-C_j^2)\left(K+(-1)^{j+1}K^\bot\right)\geq 0$ and $16e^{-4u}C_j^2|f_j|^2\geq 0$, then we get
\[
(1-C_j^2)\left(K+(-1)^{j+1}K^\bot\right)=16e^{-4u}C_j^2|f_j|^2=0.
\]
If $C_j^2=1$, then $|f_j|^2=0$ and therefore
\[
K+(-1)^{j+1}K^\bot=C_jM_j.
\]
If $C_j^2<1$, then $K+(-1)^{j+1}K^\bot=0$ and thus
\[
C_jM_j=K+(-1)^{j+1}K^\bot+8e^{-4u}|f_j|^2=8e^{-4u}|f_j|^2.
\]
We also have 
\[
16e^{-4u}C_j^2|f_j|^2=2C_j^3M_j=0,
\]
and since $M_j\neq 0$ we have $C_j=0$.

\vspace{0.1in}

\noindent \textbf{(2)} Consider the case $K(x)+(-1)^{j+1}K^\bot(x)<0$ for every $x\in S$. Assuming $C_j$ is constant with $C_j^2<1$, relation (\ref{e:cz}) gives $f_j=0$ and (\ref{e:f1z}) implies $M_j=0$.
Then
\[
|\nabla C_j|^2=(1-C_j^2)(C_jM_j-K+(-1)^jK^\bot)=0
\]
yielding a contradiction since we get
\[
K+(-1)^{j+1}K^\bot=0.
\]
Thus, $C_j^2=1$ and therefore $F$ is a complex curve with respect to $J_j$.

Assume $C_j$ is non-constant. In this case $C_j$ can't be constant in any open neighborhood either since otherwise, following similar argument we show that $F$ is a complex curve in that neighborhood and therefore it is a complex curve everywhere contradicting with the non-constant assumption.

Considering a critical point $p\in S$ of $C_j$ we obtain
\[
|\nabla C_j(p)|^2=(1-C_j^2(p))(C_j(p)M_j(p)-K+(-1)^jK^\bot)=0
\]
and assuming $C_j^2<1$, we have
\begin{equation}\label{e:inq}
C_j(p)M_j(p)=K(p)+(-1)^{j+1}K^\bot(p)<0.
\end{equation}
Now
\begin{eqnarray}
\Delta C_j(p)&=&2C_j(p)(K(p)+(-1)^{j+1}K^\bot(p))-(1+C_j^2(p))M_j(p)\nonumber \\
&=&2C_j(p)^2M(p)-(1+C_j^2(p))M_j(p)\nonumber \\
&=&-(1-C_j^2(p))M_j(p)\nonumber, 
\end{eqnarray}
and combined with inequality (\ref{e:inq}) we get that
\begin{equation}\label{e:inq1}
C_j(p)\Delta C_j(p)>0.
\end{equation}
Let $p_{min}$ and $p_{max}$ be the points of $S$ where $C_j$ is respectively minimum and maximum. Then $\Delta C_j(p_{min})>0$ and $\Delta C_j(p_{max})<0$. This means from inequality (\ref{e:inq1}) that $C_j(p_{max})<0<C_j(p_{min})$, which gives a contradiction.  Thus $C_j^2(p)=1$ for every critical point $p\in S$. 

We conclude that the only critical points of $C_j$ are the maxima and minima and Morse theory tells us that $S$ is a topological type of 2-sphere contradicting Theorem \ref{t:3}.
\end{proof}

\begin{Note}
Consider closed surfaces $(\Sigma_1,g_1)$ and $(\Sigma_2,g_2)$ with genus$\geq 2$ and metrics $g_1,g_2$ having constant negative curvature. For fixed $(p_1,p_2)\in\Sigma_1\times\Sigma_2$, Proposition \ref{p:totallygeodesics} tells us that the slices $\{p_1\}\times\Sigma_2$ and $\Sigma_1\times \{p_2\}$ are totally geodesic. Furthermore, the normal curvature $K^\bot$ vanishes and therefore for both slices the expression $K+(-1)^{j+1}K^\bot$ is negative.
\end{Note}

\vspace{0.1in}
\noindent{\bf Statements and Declarations:}

 The authors have no financial or non-financial interests that are directly or indirectly related to this work. No data was collected during the course of this work.

\vspace{0.2in}

\end{document}